\newtheorem{lem}{Lemma}
\newtheorem{thm}{Theorem}
\begin{document}
\bstctlcite{IEEEexample:BSTcontrol}

\title{Multiscale Analysis for Higher-order Tensors}


    
\author{Alp Ozdemir, Ali Zare, Mark A. Iwen, and Selin Aviyente \thanks{\footnotesize 
A. Ozdemir (ozdemira@egr.msu.edu) and S. Aviyente (aviyente@egr.msu.edu) are with the Electrical and Computer Engineering, Michigan State University, East Lansing, MI, 48824, USA.   

Ali Zare (zareali@msu.edu) is with the Department of Computational Mathematics, Science, and Engineering (CMSE), Michigan State University, East Lansing, MI, 48824, USA.

Mark A. Iwen (markiwen@math.msu.edu) is with the Department of Mathematics, and the Department of Computational Mathematics, Science, and Engineering (CMSE), Michigan State University, East Lansing, MI, 48824, USA.  

This work was in part supported by NSF DMS-1416752 and NSF CCF-1615489.
}}

\onecolumn
\pagenumbering{gobble}
\maketitle               
\parskip=0 pt
\begin{abstract}
The widespread use of multisensor technology and the emergence of big datasets have created the need to develop tools to reduce, approximate, and classify large and multimodal data such as higher-order tensors. While early approaches focused on matrix and vector based methods to represent these higher-order data, more recently it has been shown that tensor decomposition methods are better equipped to capture couplings across their different modes. For these reasons, tensor decomposition methods have found applications in many different signal processing problems including dimensionality reduction, signal separation, linear regression, feature extraction, and classification. However, most of the existing tensor decomposition methods are based on the principle of finding a low-rank approximation in a linear subspace structure, where the definition of the rank may change depending on the particular decomposition. Since many datasets are not necessarily low-rank in a linear subspace, this often results in high approximation errors or low compression rates. In this paper, we introduce a new adaptive, multi-scale tensor decomposition method for higher order data inspired by hybrid linear modeling and subspace clustering techniques. In particular, we develop a multi-scale higher-order singular value decomposition (MS-HoSVD) approach where a given tensor is first permuted and then partitioned into several sub-tensors each of which can be represented as a low-rank tensor with increased representational efficiency. The proposed approach is evaluated for dimensionality reduction and classification for several different real-life tensor signals with promising results.
\end{abstract}

\begin{IEEEkeywords}
Higher-order singular value decomposition, tensor decomposition, multi-scale decomposition, data reduction, big data applications.
\end{IEEEkeywords}

\section{Introduction}

Data in the form of multidimensional arrays, also referred to as tensors, arise in a variety of applications including chemometrics, hyperspectral imaging, high resolution videos, neuroimaging, biometrics and social network analysis \cite{letexier2008nonorthogonal, kim2009canonical, miwakeichi2004decomposing}. These applications produce massive amounts of data collected in various forms with multiple aspects and high dimensionality. Tensors, which are multi-dimensional generalizations of matrices, provide a useful representation for such data. A crucial step in many applications involving higher-orders tensors is multiway reduction of the data to ensure that the reduced representation of the tensor retains certain characteristics. Early multiway data analysis approaches reformatted the tensor data as a matrix and resorted to methods developed for classical two-way analysis. However, one cannot discover hidden components within multiway data using conventional matrix decomposition methods as matrix based representations cannot capture multiway couplings focusing on standard pairwise interactions. To this end, many different types of tensor decomposition methods have been proposed in  literature \cite{cichocki2015tensor,oseledets2011tensor,de2008decompositions2,Merhi_Face_2012,cheng2015probabilistic,fu2015joint,zareTensorPCA}.

In contrast to the matrix case where data reduction is often accomplished via low-rank representations such as singular value decomposition (SVD), the notion of rank for higher order tensors is not uniquely defined. The CANDECOMP/PARAFAC (CP) and Tucker decompositions are two of the most widely used tensor decomposition methods for data reduction \cite{kolda2009tensor,de2000multilinear}. For CP, the goal is to approximate the given tensor as a weighted sum of rank-1 tensors, where a rank-1 tensor refers to the outer product of $n$ vectors with $n$ being equal to the order of the tensor.  The Tucker model allows for interactions between the factors from different modes resulting in a typically dense, but small, core tensor. This model also introduces the notion of Tucker rank or n-rank, which refers to the n-tuple of ranks corresponding to the tensor's unfoldings along each of its modes. Therefore, low rank approximation with the Tucker model can be obtained by projections onto low-rank factor matrices. Unlike the CP decomposition, the Tucker decomposition is in general non-unique. To help obtain meaningful and unique representations by the Tucker decomposition, orthogonality, sparsity, and non-negativity constraints are often imposed on the factors yielding, e.g., the Non-Negative Tensor Factorization (NTF) and the Sparse Non-Negative Tucker Decomposition \cite{shashua2005non,cichocki2007non,cichocki2007novel}. The Tucker decomposition with orthogonality constraints on the factors is known as the Higher-Order Singular Value Decomposition (HoSVD), or Multilinear SVD \cite{de2000multilinear}.  The HoSVD can be computed by simply flattening the tensor in each mode and calculating the n-mode singular vectors corresponding to that mode.  





   With the emergence of multidimensional big data, classical tensor representation and decomposition methods have become inadequate since the size of these tensors exceeds available working memory and the processing time is very long. In order to address the problem of large-scale tensor decomposition, several block-wise tensor decomposition methods have been proposed \cite{de2008decompositions2}. The basic idea is to partition a big data tensor into smaller blocks and perform tensor related operations block-wise using a suitable tensor format. Preliminary approaches relied on a hierarchical tree structure and reduced the storage of d-dimensional arrays to the storage of auxiliary three-dimensional ones such as the tensor-train decomposition (T-Train), also known as the matrix product state (MPS) decomposition, \cite{oseledets2011tensor} and the Hierarchical Tucker Decomposition (H-Tucker) \cite{grasedyck2010hierarchical}. In particular, in the area of large volumetric data visualization, tensor based multiresolution hierarchical methods such as TAMRESH have attracted attention \cite{suter2013tamresh}. However, all of these methods are interested in fitting a low-rank model to data which lies near a \textit{linear} subspace, thus being limited to learning linear structure. 
   
 Similar to the research efforts in tensor reduction, low-dimensional subspace and manifold learning methods have also been extended for higher order data clustering and classification applications. In early work in the area, Vasilescu and Terzopoulos \cite{vasilescu2002multilinear} extended the eigenface concept to the tensorface by using higher order SVD and taking different modes such as expression, illumination and pose into account. Similarly,  2D-PCA for matrices has been used for feature extraction from face images without converting the images into vectors \cite{yang2004two}.   He et al. \cite{he2005tensor} extended locality preserving projections \cite{niyogi2004locality} to second order tensors for face recognition. Dai and Yeung \cite{dai2006tensor} presented generalized tensor embedding methods such as the extensions of local discriminant embedding methods \cite{chen2005local}, neighborhood preserving embedding methods \cite{he2005neighborhood}, and locality preserving projection methods \cite{niyogi2004locality} to tensors.  Li et al. \cite{li2008discriminant} proposed a supervised manifold learning method for vector type data which preserves local structures in each class of samples, and then extended the algorithm to  tensors to provide improved performance for face and gait recognition. Similar to vector-type manifold learning algorithms, the aim of these methods is to find an optimal \textit{linear} transformation for the tensor-type training data samples without vectorizing them and mapping these samples to a low dimensional subspace while preserving the neighborhood information. 
 

In this paper, we propose a novel multi-scale analysis technique to efficiently approximate tensor type data using locally linear low-rank approximations. The proposed method consists of two major steps: 1) Constructing a tree structure by partitioning the tensor into a collection of permuted subtensors, followed by 2) Constructing multiscale dictionaries by applying HoSVD to each subtensor.  The contributions of the proposed framework and the novelty in the proposed approach with respect to previously published work in \cite{ozdemir2016multiscale,ozdemir2017multi} are manifold.  They include: 1) The introduction of a more flexible multi-scale tensor decomposition method which allows the user to approximate a given tensor within given memory and processing power constraints; 2) the introduction of theoretical error bounds for the proposed decomposition;  3) the introduction of adaptive pruning to achieve a better trade-off between compression rate and reconstruction error for the developed factorizations; 4) the extensive evaluation of the method for both data reduction and classification applications; and 5) a detailed comparison of the proposed method to state-of-the-art tensor decomposition methods including the HoSVD, T-Train, and H-Tucker decompositions. 



The remainder of the paper is organized as follows. In Section~\ref{sec:background}, basic notation and tensor operations are reviewed.  The proposed multiscale tensor decomposition method along with theoretical error bounds are then introduced in Section~\ref{multiScaleMethod}.
Sections~\ref{sec:DataCompress} and \ref{sec:Classification} illustrate the  results of applying the proposed framework to data reduction and classification problems, respectively.

\section{Background}
\label{sec:background}

\subsection{Tensor Notation and Algebra}
A multidimensional array with $N$ modes $\mathcal{X}\in \mathbb{R}^{I_1\times I_2\times ... \times I_N}$ is called a tensor, where $x_{i_1,i_2,..i_N}$ denotes the $({i_1,i_2,..i_N})^{th}$ element of the tensor $\mathcal{X}$. The vectors in $\mathbb{R}^{I_n}$ obtained by fixing all of the indices of such a tensor $\mathcal{X}$ except for the one that corresponds to its $n$th mode are called its {\it mode-$n$ fibers}. Let $[N] := \{1, \dots, N \}$ for all $N \in \mathbb{N}$.  Basic tensor operations are reviewed below (see, e.g., \cite{kolda2009tensor}, \cite{de2008tensor}, \cite{vannieuwenhoven2012new}).

\noindent {\bf Tensor addition and multiplication by a scalar:} Two tensors $\mathcal{X}, \mathcal{Y} \in \mathbb{R}^{I_1\times I_2\times ... \times I_N}$ can be added using component-wise tensor addition.  The resulting tensor $\mathcal{X} + \mathcal{Y} \in \mathbb{R}^{I_1\times I_2\times ... \times I_N}$ has its entries given by $\left( \mathcal{X} + \mathcal{Y} \right)_{i_1,i_2,..i_N} = x_{i_1,i_2,..i_N} + y_{i_1,i_2,..i_N}$.  Similarly, given a scalar $\alpha \in \mathbb{R}$ and a tensor $\mathcal{X} \in \mathbb{R}^{I_1\times I_2\times ... \times I_N}$ the rescaled tensor $\alpha \mathcal{X}  \in \mathbb{R}^{I_1\times I_2\times ... \times I_N}$ has its entries given by $\left( \alpha \mathcal{X} \right)_{i_1,i_2,..i_N} = \alpha~x_{i_1,i_2,..i_N} $.

\noindent {\bf Mode-$n$ products:} The mode-$n$ product of a tensor $\mathcal{X}\in \mathbb{R}^{I_1\times ... I_n\times ...\times I_N}$ and a matrix ${\bf{U}} \in \mathbb{R}^{J\times I_n} $ is denoted as $\mathcal{Y}=\mathcal{X} \times_{n} {\bf{U}}$, $(\mathcal{Y})_{i_{1},i_{2},\ldots,i_{n-1},j,i_{n+1},\ldots,i_{N}}=\sum_{i_{n}=1}^{I_{n}}x_{i_{1},\ldots,i_{n},\ldots,i_{N}}u_{j,i_{n}}$.  It is of size $I_1\times ...\times I_{n-1} \times J \times I_{n+1}\times ...\times I_N$.  The following facts about mode-$n$ products are useful (see, e.g., \cite{kolda2009tensor},\cite{vannieuwenhoven2012new}).

\begin{lem}
Let $\mathcal{X}, \mathcal{Y} \in \mathbb{R}^{I_1\times I_2\times ... \times I_N}$, $\alpha, \beta \in \mathbb{R}$, and ${\bf U}^{(n)}, {\bf V}^{(n)} \in \mathbb{R}^{J_n \times I_n}$ for all $n \in [N]$.  The following are true:
\begin{enumerate}
\item[($a$)] $\left( \alpha \mathcal{X} + \beta \mathcal{Y} \right) \times_n {\bf U}^{(n)} = \alpha \left( \mathcal{X} \times_n {\bf U}^{(n)} \right) + \beta \left( \mathcal{Y} \times_n {\bf U}^{(n)} \right)$.

\item[($b$)] $\mathcal{X} \times_n  \left( \alpha {\bf U}^{(n)} + \beta {\bf V}^{(n)} \right) = \alpha \left( \mathcal{X} \times_n {\bf U}^{(n)} \right) + \beta \left( \mathcal{X} \times_n {\bf V}^{(n)} \right)$. 

\item[($c$)] If $n \neq m$ then $\mathcal{X} \times_n {\bf U}^{(n)} \times_m {\bf V}^{(m)} = \left( \mathcal{X} \times_n {\bf U}^{(n)} \right) \times_m {\bf V}^{(m)} = \left( \mathcal{X} \times_m {\bf V}^{(m)} \right) \times_n {\bf U}^{(n)} = \mathcal{X} \times_m {\bf V}^{(m)} \times_n {\bf U}^{(n)}$ .

\item[($d$)] If ${\bf W} \in \mathbb{C}^{P \times J_n}$ then $\mathcal{X} \times_n {\bf U}^{(n)} \times_n {\bf W} = \left( \mathcal{X} \times_n {\bf U}^{(n)} \right) \times_n {\bf W} = \mathcal{X} \times_n \left( {\bf WU}^{(n)} \right) = \mathcal{X} \times_n {\bf WU}^{(n)} $.
\end{enumerate}

\label{lem:modeProdProps}
\end{lem}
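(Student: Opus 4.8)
The plan is to verify each of the four identities entrywise, using only the coordinate definition of the mode-$n$ product given above, namely $\left( \mathcal{X} \times_n \mathbf{U} \right)_{i_1,\ldots,i_{n-1},j,i_{n+1},\ldots,i_N} = \sum_{i_n=1}^{I_n} x_{i_1,\ldots,i_N}\, u_{j,i_n}$. Two tensors are equal exactly when all of their entries agree, so in each case it suffices to fix an arbitrary output multi-index and show that the two sides produce the same scalar. Every identity then reduces to elementary facts about finite sums of real (or complex) numbers.

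For parts $(a)$ and $(b)$ I would simply expand the left-hand side. In $(a)$ the entry of $\left( \alpha\mathcal{X} + \beta\mathcal{Y}\right)\times_n \mathbf{U}^{(n)}$ is $\sum_{i_n} \left(\alpha x_{i_1,\ldots,i_N} + \beta y_{i_1,\ldots,i_N}\right) u^{(n)}_{j,i_n}$; distributing the product over the sum and splitting into two sums produces exactly the corresponding entry of $\alpha\left(\mathcal{X}\times_n\mathbf{U}^{(n)}\right) + \beta\left(\mathcal{Y}\times_n\mathbf{U}^{(n)}\right)$. Part $(b)$ is identical in spirit, except that the linearity is now used in the matrix entry, writing $\left(\alpha u^{(n)}_{j,i_n} + \beta v^{(n)}_{j,i_n}\right)$ inside the sum and splitting. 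Both thus follow from the distributive and commutative laws for finite sums of scalars.

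For part $(c)$, with $n\neq m$, I would write the entry of $\mathcal{X}\times_n\mathbf{U}^{(n)}\times_m\mathbf{V}^{(m)}$ as a double sum $\sum_{i_m}\sum_{i_n} x_{i_1,\ldots,i_N}\, u^{(n)}_{j,i_n}\, v^{(m)}_{k,i_m}$, where $j$ and $k$ are the fixed output indices in modes $n$ and $m$. Because $n\neq m$ the summation variables $i_n$ and $i_m$ range over disjoint modes, so the finite double sum may be reordered; swapping the order of summation yields the entry of $\mathcal{X}\times_m\mathbf{V}^{(m)}\times_n\mathbf{U}^{(n)}$. The parenthesized equalities in the statement merely record that each mode product is applied one mode at a time, so they require no separate argument. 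Part $(d)$ treats the case where both products act on the same mode $n$: expanding $\left(\mathcal{X}\times_n\mathbf{U}^{(n)}\right)\times_n\mathbf{W}$ gives $\sum_{p}\left(\sum_{i_n} x_{i_1,\ldots,i_N}\, u^{(n)}_{p,i_n}\right) w_{\ell,p}$, and interchanging the finite sums leaves $\sum_{i_n} x_{i_1,\ldots,i_N}\left(\sum_p w_{\ell,p}\, u^{(n)}_{p,i_n}\right)$, whose inner sum is precisely the $(\ell,i_n)$ entry of the matrix product $\mathbf{W}\mathbf{U}^{(n)}$; this is the entry of $\mathcal{X}\times_n\left(\mathbf{W}\mathbf{U}^{(n)}\right)$.

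None of the steps presents a genuine mathematical obstacle; the only care required is bookkeeping for the multi-indices, namely keeping the fixed output indices in modes $n$ (and $m$) distinct from the summation variables and correctly tracking which index position is being contracted. The one point worth stating explicitly is that every interchange of summation is legitimate because all sums are finite, so commutativity and associativity of scalar addition apply directly, with no convergence considerations needed.
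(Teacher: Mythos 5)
Your proposal is correct. Note that the paper itself does not prove Lemma~\ref{lem:modeProdProps} at all --- it states these identities as known facts and defers to the cited references (Kolda--Bader and Vannieuwenhoven et al.), so there is no in-paper argument to compare against; your entrywise verification from the coordinate definition of the mode-$n$ product is exactly the standard proof those references give. All four parts check out: the linearity claims $(a)$--$(b)$ reduce to distributing scalars over finite sums, and $(c)$--$(d)$ reduce to interchanging finite double sums, with the inner sum in $(d)$ correctly identified as an entry of the matrix product ${\bf W}{\bf U}^{(n)}$. You are also right that the chains of equalities involving parenthesization in $(c)$ and $(d)$ are purely the left-association convention for iterated mode products and need no separate argument.
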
  

\noindent {\bf Tensor matricization:} The process of reordering the elements of the tensor into a matrix is known as matricization or unfolding. The mode-n matricization of a tensor $\mathcal{Y} \in \mathbb{R}^{I_1\times I_2\times ... \times I_N}$ is denoted as ${\bf Y}_{(n)} \in \mathbb{R}^{I_n \times \prod_{m \neq n} I_m}$ and is obtained by arranging $\mathcal{Y}$'s mode-n fibers to be the columns of the resulting matrix. Unfolding the tensor $\mathcal{Y}= \mathcal{X}\times_1 {\bf U}^{(1)}\times_2 {\bf U}^{(2)}...\times_N {\bf U}^{(N)} =: {\displaystyle \mathcal{X} \bigtimes_{n=1}^N {\bf U}^{(n)}}$ along mode-$n$ is equivalent to 
\begin{equation}
{\bf Y}_{(n)} = {\bf U}^{(n)} {\bf X}_{(n)}({\bf U}^{(N)} \otimes... {\bf U}^{(n+1)}\otimes {\bf U}^{(n-1)}...\otimes{\bf U}^{(1)} )^\top,
\label{equ:KronModenFlat}
\end{equation}
where $\otimes$ is the matrix Kronecker product.  In particular, \eqref{equ:KronModenFlat} implies that the matricization $\left( \mathcal{X} \times_n {\bf U}^{(n)} \right)_{(n)} = {\bf U}^{(n)} {\bf X}_{(n)}$.\footnote{Simply set ${\bf U}^{(m)} = {\bf I}$ (the identity) for all $m \neq n$ in \eqref{equ:KronModenFlat}.  This fact also easily follows directly from the definition of the mode-$n$ product.}

It is worth noting that trivial inner product preserving isomorphisms exist between a tensor space $\mathbb{R}^{I_1\times I_2\times ... \times I_N}$ and any of its matricized versions (i.e., mode-$n$ matricization can be viewed as an isomorphism between the original tensor vector space $\mathbb{R}^{I_1\times I_2\times ... \times I_N}$ and its mode-$n$ matricized target vector space $\displaystyle \mathbb{R}^{I_n \times \prod_{m \neq n} I_m}$).  In particular, the process of matricizing tensors is linear.  If, for example, $\mathcal{X}, \mathcal{Y} \in \mathbb{R}^{I_1\times I_2\times ... \times I_N}$ then one can see that the mode-$n$ matricization of $\mathcal{X} + \mathcal{Y} \in \mathbb{R}^{I_1\times I_2\times ... \times I_N}$ is $\left( \mathcal{X} + \mathcal{Y} \right)_{(n)} = {\bf X}_{(n)} + {\bf Y}_{(n)}$ for all modes $n \in [N]$.  

\noindent {\bf Tensor Rank:}  Unlike matrices, which have a unique definition of rank, there are multiple rank definitions for tensors including \textit{tensor rank} and \textit{tensor n-rank}.
The \textit{rank} of a tensor $\mathcal{X}\in \mathbb{R}^{I_1\times ... I_n\times ...\times I_N}$ is the smallest number of rank-one tensors that form $\mathcal{X}$ as their sum. The \textit{n-rank} of $\mathcal{X}$ is the collection of ranks of unfoldings ${\bf X}_{(n)}$ and is denoted as: 
\begin{equation}
n\mbox{-rank}(\mathcal{X}) = \left( \mbox{rank}({\bf X}_{(1)}),\; \mbox{rank}( {\bf X}_{(2)}),...,\; \mbox{rank}( {\bf X}_{(N)}) \right).
\end{equation}

\noindent {\bf Tensor inner product:} The inner product of two same sized tensors $\mathcal{X}, \mathcal{Y}\in \mathbb{R}^{I_1\times I_2\times ... \times I_N}$ is the sum of the products of their elements.
\begin{equation}
\left\langle \mathcal{X},\mathcal{Y} \right\rangle = {\sum_{i_1=1}^{I_1} \sum_{i_2=1}^{I_2} ...\sum_{i_N=1}^{I_N} x_{i_1,i_2,...,i_N}y_{i_1,i_2,...,i_N}}.
\end{equation}  
It is not too difficult to see that matricization preserves Hilbert-Schmidt/Frobenius matrix inner products, i.e., $\left\langle \mathcal{X},\mathcal{Y} \right\rangle = \left\langle {\bf X}_{(n)}, {\bf Y}_{(n)} \right\rangle_{\rm F} = {\rm Trace}\left( {\bf X}_{(n)}^\top {\bf Y}_{(n)} \right)$ holds for all $n \in [N]$.  If $\left\langle \mathcal{X},\mathcal{Y} \right\rangle = 0$, $\mathcal{X}$ and $\mathcal{Y}$ are {\it orthogonal}.  

\noindent {\bf Tensor norm:} Norm of a tensor $\mathcal{X}\in \mathbb{R}^{I_1\times I_2\times ... \times I_N}$ is the square root of the sum of the squares of all its elements.
\begin{equation}
\parallel \mathcal{X} \parallel = \sqrt{\left\langle \mathcal{X},\mathcal{X} \right\rangle}= \sqrt{\sum_{i_1=1}^{I_1} \sum_{i_2=1}^{I_2} ...\sum_{i_N=1}^{I_N} x_{i_1,i_2,...,i_N}^2}.
\end{equation}  
The fact that matricization preserves Frobenius matrix inner products also means that it preserves Frobenius matrix norms.  As a result we have that $\left\| \mathcal{X} \right\| = \left\| {\bf X}_{(n)} \right\|_{\rm F}$ holds for all $n \in [N]$.  If $\mathcal{X}$ and $\mathcal{Y}$ are orthogonal and also have unit norm (i.e., have $\| \mathcal{X} \| = \| \mathcal{Y} \| = 1$) we will say that they are an {\it orthonormal} pair.

\subsection{Some Useful Facts Concerning Mode-$n$ Products and Orthogonality}

Let ${\bf I} \in \mathbb{R}^{I_n \times I_n}$ be the identity matrix.  Given a (low rank) orthogonal projection matrix ${\bf P} \in \mathbb{R}^{I_n \times I_n}$ one can decompose any given tensor $\mathcal{X} \in \mathbb{R}^{I_1\times I_2\times ... \times I_N}$ into two orthogonal tensors using Lemma~\ref{lem:modeProdProps} (b)
$$\mathcal{X} = \mathcal{X} \times_n {\bf I} = \mathcal{X} \times_n (({\bf I} - {\bf P}) + {\bf P}) = \mathcal{X} \times_n ({\bf I} - {\bf P}) + \mathcal{X} \times_n {\bf P}.$$
To check that the last two summands are orthogonal one can use \eqref{equ:KronModenFlat} to compute that
$$\left\langle \mathcal{X} \times_n ({\bf I} - {\bf P}),\mathcal{X} \times_n {\bf P} \right\rangle = \left\langle ({\bf I} - {\bf P}) {\bf X}_{(n)},  {\bf P} {\bf X}_{(n)} \right\rangle_{\rm F} = {\rm Trace}\left( {\bf X}_{(n)}^\top ({\bf I} - {\bf P}) {\bf P} {\bf X}_{(n)} \right) = 0.$$
As a result one can also verify that the Pythagorean theorem holds, i.e., that $\| \mathcal{X} \|^2 = \| \mathcal{X} \times_n {\bf P} \|^2 + \| \mathcal{X} \times_n ({\bf I} - {\bf P}) \|^2$.

If we now regard $\mathcal{X} \times_n {\bf P}$ as a low rank approximation to $\mathcal{X}$ then we can see that its approximation error
$$\mathcal{X} - \mathcal{X} \times_n {\bf P} = \mathcal{X} \times_n ({\bf I} - {\bf P})$$
is orthogonal to the low rank approximation $\mathcal{X} \times_n {\bf P}$, as one would expect.  Furthermore, the norm of its approximation error satisfies $\| \mathcal{X} \times_n ({\bf I} - {\bf P}) \|^2 = \| \mathcal{X} \|^2 - \| \mathcal{X} \times_n {\bf P} \|^2$.  By continuing to use similar ideas in combination with lemma~\ref{lem:modeProdProps} for all modes one can prove the following more general Pythagorean result (see, e.g., theorem 5.1 in \cite{vannieuwenhoven2012new}).

\begin{lem}
Let $\mathcal{X} \in \mathbb{R}^{I_1\times I_2\times ... \times I_N}$ and ${\bf U}^{(n)}\in \mathbb{R}^{I_n\times I_n}$ be an orthogonal projection matrix for all $n \in [N]$.  Then,
$$ \left\| \mathcal{X} - \mathcal{X}\times_1 {\bf U}^{(1)}\times_2 {\bf U}^{(2)}...\times_N {\bf U}^{(N)} \right\|^2 =: \left\| \mathcal{X} - \mathcal{X} \bigtimes_{n=1}^N {\bf U}^{(n)} \right\|^2 = \sum^N_{n=1} \left\| \mathcal{X} \bigtimes_{h=1}^{n-1} {\bf U}^{(h)} \times_{n} \left({\bf I} - {\bf U}^{(n)} \right) \right\|^2.$$
\label{lem:TensorPythagorean}
\end{lem}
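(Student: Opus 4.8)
The plan is to prove the identity in two stages: first rewrite the total approximation error $\mathcal{X} - \mathcal{X} \bigtimes_{n=1}^N {\bf U}^{(n)}$ as a telescoping sum whose summands are exactly the tensors appearing on the right-hand side, and then show that those summands are pairwise orthogonal so that the Pythagorean theorem for finitely many orthogonal tensors collapses the norm of the sum into the sum of the norms.

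For the telescoping step I would introduce the partial products $\mathcal{Y}_k := \mathcal{X} \bigtimes_{n=1}^{k} {\bf U}^{(n)}$ for $k \in \{0,1,\dots,N\}$, with $\mathcal{Y}_0 := \mathcal{X}$, so that $\mathcal{Y}_N = \mathcal{X} \bigtimes_{n=1}^N {\bf U}^{(n)}$. Writing $\mathcal{X} - \mathcal{Y}_N = \sum_{n=1}^N (\mathcal{Y}_{n-1} - \mathcal{Y}_n)$ and applying Lemma~\ref{lem:modeProdProps}~(b) together with the splitting ${\bf I} = ({\bf I} - {\bf U}^{(n)}) + {\bf U}^{(n)}$ (and the fact that applying ${\bf I}$ in mode $n$ changes nothing), each consecutive difference becomes
$$\mathcal{Y}_{n-1} - \mathcal{Y}_n = \mathcal{Y}_{n-1} \times_n {\bf I} - \mathcal{Y}_{n-1} \times_n {\bf U}^{(n)} = \mathcal{Y}_{n-1} \times_n \left( {\bf I} - {\bf U}^{(n)} \right) = \mathcal{X} \bigtimes_{h=1}^{n-1} {\bf U}^{(h)} \times_n \left( {\bf I} - {\bf U}^{(n)} \right) =: \mathcal{W}_n .$$
These $\mathcal{W}_n$ are precisely the tensors whose norms appear on the right-hand side of the claim, so only their pairwise orthogonality remains.

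The heart of the argument, and the step I expect to be the main obstacle, is showing $\langle \mathcal{W}_m, \mathcal{W}_n \rangle = 0$ for $m \neq n$; assume without loss of generality that $m < n$. I would evaluate this through the mode-$m$ matricization, which preserves the Frobenius inner product. Using Lemma~\ref{lem:modeProdProps}~(c) to commute the mode products across distinct modes, I can isolate the mode-$m$ factor of each tensor: in $\mathcal{W}_m$ it is $\left( {\bf I} - {\bf U}^{(m)} \right)$, whereas in $\mathcal{W}_n$ (because $m \le n-1$) it is ${\bf U}^{(m)}$. The matricization formula \eqref{equ:KronModenFlat} then yields $(\mathcal{W}_m)_{(m)} = \left( {\bf I} - {\bf U}^{(m)} \right) {\bf X}_{(m)} {\bf A}_m^\top$ and $(\mathcal{W}_n)_{(m)} = {\bf U}^{(m)} {\bf X}_{(m)} {\bf A}_n^\top$, where ${\bf A}_m$ and ${\bf A}_n$ collect the Kronecker products of the remaining (mode $\neq m$) factors and are otherwise irrelevant to the computation.

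Taking the Frobenius inner product as ${\rm Trace}\!\left( (\mathcal{W}_m)_{(m)}^\top (\mathcal{W}_n)_{(m)} \right)$ and using that ${\bf U}^{(m)}$, hence ${\bf I} - {\bf U}^{(m)}$, is symmetric, the product matrix carries the central factor $\left( {\bf I} - {\bf U}^{(m)} \right) {\bf U}^{(m)}$. Since ${\bf U}^{(m)}$ is an orthogonal projection, $\left({\bf U}^{(m)}\right)^2 = {\bf U}^{(m)}$, so $\left( {\bf I} - {\bf U}^{(m)} \right) {\bf U}^{(m)} = {\bf 0}$ and the entire product matrix vanishes, giving $\langle \mathcal{W}_m, \mathcal{W}_n \rangle = 0$. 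With pairwise orthogonality in hand, expanding $\left\| \sum_{n=1}^N \mathcal{W}_n \right\|^2$ via the tensor inner product kills every cross term, leaving $\left\| \mathcal{X} - \mathcal{Y}_N \right\|^2 = \sum_{n=1}^N \left\| \mathcal{W}_n \right\|^2$, which is exactly the stated identity. The only real care required is the index bookkeeping that guarantees, for each pair $m < n$, that the complementary projection $\left( {\bf I} - {\bf U}^{(m)} \right)$ of $\mathcal{W}_m$ meets the projection ${\bf U}^{(m)}$ of $\mathcal{W}_n$ in the \emph{same} mode $m$, which is what forces the annihilation.
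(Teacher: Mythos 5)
Your proof is correct and takes essentially the same approach as the paper, which sketches the single-mode splitting ${\bf I} = {\bf U}^{(n)} + ({\bf I}-{\bf U}^{(n)})$, verifies orthogonality of the two pieces through the mode-$n$ matricization and a trace computation, and then iterates over all modes (deferring details to theorem 5.1 of \cite{vannieuwenhoven2012new}). Your telescoping sum with a direct pairwise-orthogonality check is precisely that iteration unrolled, built from the same ingredients: Lemma~\ref{lem:modeProdProps}, the matricization identity \eqref{equ:KronModenFlat}, symmetry and idempotence of orthogonal projections, and the Pythagorean theorem.
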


\subsection{The Higher Order Singular Value Decomposition (HoSVD)}
 Any tensor $\mathcal{X}\in \mathbb{R}^{I_1\times I_2\times ...\times I_N}$ can be decomposed as mode products of a core tensor $\mathcal{C}\in \mathbb{R}^{I_1\times I_2\times ...\times I_N}$ with $N$ orthogonal matrices ${\bf U}^{(n)}\in \mathbb{R}^{I_n\times I_n}$ each of which is composed of the left singular vectors of ${\bf X}_{(n)}$ \cite{de2000multilinear}:
\begin{equation}
\mathcal{X} = \mathcal{C}\times_1 {\bf U}^{(1)} \times_2 {\bf U}^{(2)}...\times_N{\bf U}^{(N)}=\mathcal{C}\bigtimes_{n=1}^N {\bf U}^{(n)}
\end{equation}
\noindent where $\mathcal{C}$ is computed as 
\begin{equation}
\mathcal{C} = \mathcal{X}\times_1 \left({\bf U}^{(1)}\right)^\top \times_2 \left({\bf U}^{(2)}\right)^\top ...\times_N \left({\bf U}^{(N)}\right)^\top .
\end{equation}

Let $\mathcal{C}_{i_n=\alpha}$
be a subtensor of $\mathcal{C}$ obtained by fixing the $n$th index to $\alpha$. This subtensor satisfies the following properties:

\begin{itemize}
\item all-orthogonality: $\mathcal{C}_{i_n=\alpha} $ and $\mathcal{C}_{i_n=\beta}$ are orthogonal for all possible values of $n$, $\alpha$ and $\beta$ subject to $\alpha \neq \beta$.
\begin{equation}
\langle \mathcal{C}_{i_n=\alpha},\; \mathcal{C}_{i_n=\beta} \rangle = 0 \; when\; \alpha \neq \beta.
\end{equation}
\item  ordering: 
\begin{equation}
\parallel \mathcal{C}_{i_n=1} \parallel \geq \parallel \mathcal{C}_{i_n=2} \parallel\geq ... \geq \parallel \mathcal{C}_{i_n=I_n} \parallel\geq 0
\end{equation}

for $n\in \left[N\right]$.

\end{itemize}

\section{Multiscale Analysis of Higher-order Datasets}
\label{multiScaleMethod}

In this section, we present a new tensor decomposition method named Multiscale HoSVD (MS-HoSVD) for an $N$th order tensor, $\mathcal{X}\in \mathbb{R}^{I_1\times I_2\times ...\times I_N}$.    
The proposed method recursively applies the following two-step approach: (i) Low-rank tensor approximation, followed by (ii) Partitioning the residual (original minus low-rank) tensor into subtensors.   

A tensor $\mathcal{X}$ is first decomposed using HoSVD as follows:
\begin{equation}
\mathcal{X}= \mathcal{C}\times_1 {\bf U}^{(1)} \times_2 {\bf U}^{(2)}...\times_N{\bf U}^{(N)},
\end{equation}
\noindent where the ${\bf U}^{(n)}$'s are the left singular vectors of the unfoldings ${\bf X}_{(n)}$. The low-rank approximation of $\mathcal{X}$ is obtained by
\begin{equation}
\hat{\mathcal{X}}_0= \mathcal{C}_0\times_1 \hat{\bf U}^{(1)} \times_2 \hat{\bf U}^{(2)}...\times_N \hat{\bf U}^{(N)}
\label{equ:DefScale0Approx}
\end{equation}
\noindent where $\hat{\bf U}^{(n)}\in \mathbb{R}^{I_n\times r_n}$s  are the truncated matrices obtained by keeping the first $r_n$ columns of ${\bf U}^{(n)}$ and $\mathcal{C}_0=\mathcal{X}\times_1 \left( \hat{\bf U}^{(1)} \right)^{\top} \times_2 \left( \hat{\bf U}^{(2)} \right)^{\top}...\times_N \left( \hat{\bf U}^{(N)} \right)^{\top}$. The multilinear-rank of $\hat{\mathcal{X}}_0$, $\left\lbrace r_1,...,\;r_N \right\rbrace$, can either be given a \textit{priori}, or an energy criterion can be used to determine the minimum number of singular values to keep along each mode as:
\begin{equation}
r_n={\rm arg} \min_i \sum_{l=1}^i \sigma^{(n)}_l  \;\;\;  s.t. \;\;\; \dfrac{\sum_{l=1}^i \sigma^{(n)}_l }{ \sum_{l=1}^{I_{n}}\sigma^{(n)}_l} \geq \tau,
\label{equ:tauDef}
\end{equation}
\noindent where $\sigma^{(n)}_l$ is the $l$th singular value of the matrix obtained from the SVD of the unfolding ${\bf X}_{(n)}$, 
and $\tau$ is an energy threshold. Once $\hat{\mathcal{X}}_0$ is obtained, the tensor $\mathcal{X}$ can be written as 
\begin{equation}
\mathcal{X}= \hat{\mathcal{X}}_0 + \mathcal{W}_0,
\label{equ:ResidualDef}
\end{equation}
\noindent where $\mathcal{W}_0$ is the residual tensor.

For the first-scale analysis, to better encode the details of $\mathcal{X}$, we adapted an idea similar to the one presented in \cite{ozdemir2016multiscale, ozdemir2015locally}. The $0^{\rm th}$-scale residual tensor, $\mathcal{W}_0$ is first decomposed into subtensors as follows. $\mathcal{W}_0\in \mathbb{R}^{I_1\times I_2\times ...\times I_N}$ is unfolded across each mode yielding ${\bf W}_{0,(n)} \in \mathbb{R}^{I_n\times \prod_{j\neq n}I_j}$ whose columns are the mode-$n$ fibers of $\mathcal{W}_0$. 
For each mode, rows of ${\bf W}_{0,(n)}$ are partitioned into $c_n$ non-overlapping clusters using a clustering algorithm such as local subspace analysis (LSA) \cite{yan2006general} in order to encourage the formation of new subtensors which are intrinsically lower rank, and therefore better approximated via a smaller HoSVD at the next scale. The Cartesian product of the partitioning labels
coming from the $N$ modes yields $K=\prod_{i=1}^{N}c_{i}$ disjoint subtensors $\mathcal{X}_{1,k}$ where $k\in [K]$.

Let $ J_{0}^n $ be the index set corresponding to  the $n$th mode of  $\mathcal{W}_{0}$ with $J_0^n=[I_n]$, 
and let $ J_{1,k}^n $ be the index set of the subtensor $\mathcal{X}_{1,k}$ for the $n$th mode, where $J_{1,k}^n\subset J_0^n$ for all $k \in [K]$ and $n\in [N]$. Index sets of subtensors for the $n$th mode satisfy $\bigcup_{k=1}^K J_{1,k}^n= J_0^n$  for all $n \in [N]$. The $k$th subtensor $\mathcal{X}_{1,k} \in \mathbb{R}^{ \left| J_{1,k}^1 \right| \times \left| J_{1,k}^2 \right| \times \dots \times \left| J_{1,k}^N \right|}$ is obtained by 
\begin{equation}
\begin{array}{c}
\mathcal{X}_{1,k}(i_1,\;i_2,...,\;i_N)= \mathcal{W}_0(J_{1,k}^1(i_1),\; J_{1,k}^2(i_2), ...,\; J_{1,k}^N(i_N)),\\
\mathcal{X}_{1,k}= \mathcal{W}_0(J_{1,k}^1\times J_{1,k}^2 \times ...\times J_{1,k}^N), 
\end{array}
\label{equ:subtensor1}
\end{equation} 
\noindent where $i_n \in \left[ \left| J_{1,k}^n \right| \right]$. Low-rank approximation for each subtensor is obtained by applying HoSVD as:

\begin{equation}
\hat{\mathcal{X}}_{1,k} = \mathcal{C}_{1,k}\times_1 \hat{\bf U}^{(1)}_{1,k} \times_2 \hat{\bf U}^{(2)}_{1,k}...\times_N \hat{\bf U}^{(N)}_{1,k},
\label{equ:LowrankSubTensor}
\end{equation} 
 
\noindent where $\mathcal{C}_{1,k}$ and $\hat{\bf U}^{(n)}_{1,k} \in \mathbb{R}^{|J^{n}_{1,k}|\times r^{(n)}_{1,k}}$s correspond to the core tensor and low-rank projection basis matrices of $\mathcal{X}_{1,k}$, respectively. We can then define $\hat{\mathcal{X}}_1$ as the $1^{\rm st}$-scale approximation of $\mathcal{X}$ formed by  mapping all of the subtensors onto $\hat{\mathcal{X}}_{1,k}$ as follows:
\begin{equation}
\hat{\mathcal{X}}_1(J_{1,k}^1\times J_{1,k}^2 \times ...\times J_{1,k}^n)=\hat{\mathcal{X}}_{1,k}.
\label{equ:FirstScaleApproxDef}
\end{equation}
Similarly, $1^{\rm st}$ scale residual tensor is obtained by
\begin{equation}
{\mathcal{W}}_1(J_{1,k}^1\times J_{1,k}^2 \times ...\times J_{1,k}^n)=\mathcal{W}_{1,k},
\end{equation}
\noindent where $\mathcal{W}_{1,k} = \mathcal{X}_{1,k} -\hat{\mathcal{X}}_{1,k}$. Therefore, $\mathcal{X}$ can be rewritten as:
\begin{equation}
\mathcal{X}=\hat{\mathcal{X}}_0+\mathcal{W}_0= \hat{\mathcal{X}}_0+ \hat{\mathcal{X}}_1 + \mathcal{W}_1.
\label{equ:FirstScaleResidualDef}
\end{equation}

Continuing in this fashion the $j^{th}$ scale approximation of $\mathcal{X}$ is obtained by partitioning $\mathcal{W}_{j-1,k}$s into subtensors $\mathcal{X}_{j,k}$s and fitting a low-rank model to each one of them in a similar fashion. Finally, the $j^{th}$ scale decomposition of $\mathcal{X}$ can be written as: 
\begin{equation}
\mathcal{X}= \sum_{i=0}^j{\hat{\mathcal{X}}_i} + \mathcal{W}_j.
\end{equation}
Algorithm \ref{mshosvd_algo} describes the pseudo code for this approach and Fig. \ref{1_scale_MS-HoSVD} illustrates 1-scale MS-HoSVD.

\begin{figure}[H]
\centering
\includegraphics[width=17 cm]{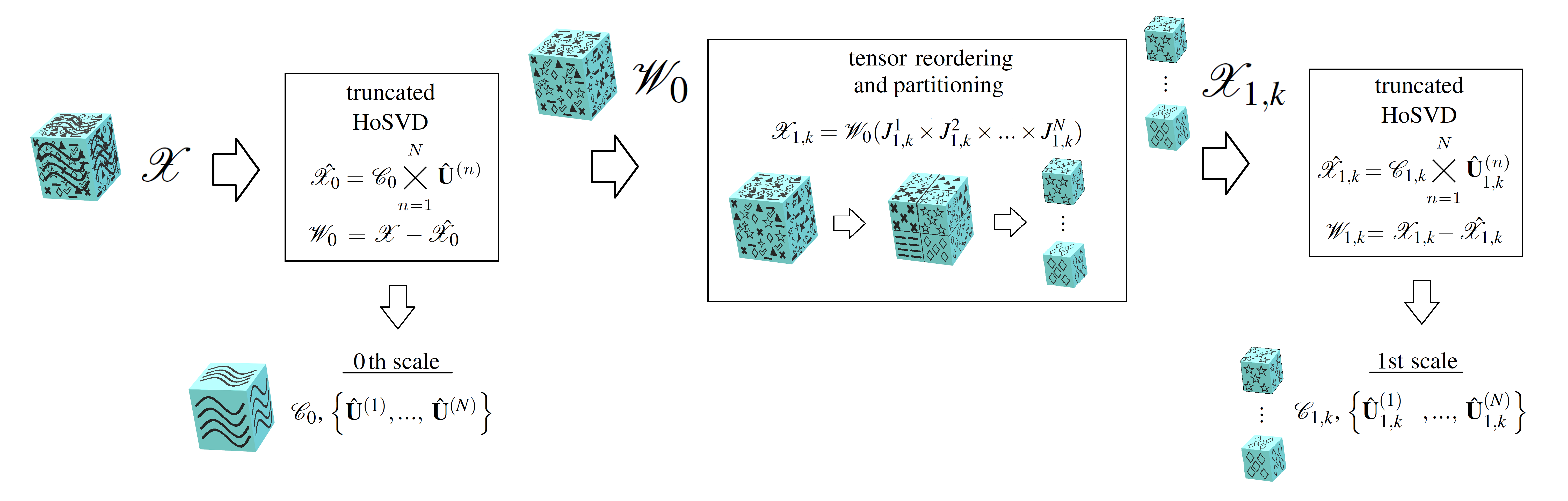}
\caption{Illustration of 1-scale MS-HoSVD. Higher scale decomposition can be obtained by applying the illustrated approach to the residual tensors recursively.}
\label{1_scale_MS-HoSVD}
\end{figure}

\begin{algorithm}[h]
\small
\caption{Multiscale HoSVD}
\label{mshosvd_algo}
\begin{algorithmic}[1]
\STATE Input:  ${\mathcal{X}}$: tensor ,  ${\bf C}= \left( c_1,\; c_2,\; ...,\; c_N\right)$: the desired number of clusters for each mode, $s_H$: the highest scale of MS-HoSVD. 
\STATE Output:  $\mathit{T}$: Tree structure containing the MS-HoSVD decomposition of $\hat{\mathcal{X}}$.

\STATE Create an empty tree $\mathit{T}$
\STATE Create an empty list $\mathit{L}$
\STATE Add the node containing ${\mathcal{X}} =: \mathcal{X}_{0,1}$ to $\mathit{L}$ with ${\rm Parent}(0,1) = \emptyset$ (i.e., this is the root of the the tree).

\WHILE {$\mathit{L}$ is not empty.}
\STATE Pop a node corresponding to ${\mathcal{X}}_{s,t}$ (the $t$th subtensor from $s$th scale) from the list $\mathit{L}$  where $s\in \left\lbrace 0,..., s_H\right\rbrace$ and $t\in \left\lbrace 1,..., K^s\right\rbrace$.
\STATE $\mathcal{C}_{s,t}$, $\left\lbrace \hat{\bf U}^{(n)}_{s,t}\right\rbrace \leftarrow $ truncatedHOSVD($\mathcal{X}_{s,t}$).

\STATE Add the node containing $\mathcal{C}_{s,t}$, $\left\lbrace \hat{\bf U}^{(n)}_{s,t}\right\rbrace$ to $\mathit{T}$ as a child of ${\rm Parent}(s,t)$.
\IF {$s<s_H$}
\STATE Compute $\mathcal{W}_{s,t}={\mathcal{X}}_{s,t}-\hat{\mathcal{X}}_{s,t}$.
\STATE Create $K$ subtensors $\mathcal{X}_{s+1,K(t-1)+k}$  with $J_{s+1,K(t-1)+k}^n$ from $\mathcal{W}_{s,t}$ where $k\in \left\lbrace 1,\;2,\;...,\;K \right\rbrace$ and $n\in \left\lbrace 1,\;2,\;...,\;N \right\rbrace$. 
\STATE Add $K$ nodes containing $\mathcal{X}_{s+1,K(t-1)+k}$  and $\left\lbrace J_{s+1,K(t-1)+k}^n\right\rbrace$ to  $\mathit{L}$ with ${\rm Parent}(s+1,K(t-1)+k) = (s,t)$.
\ENDIF
\ENDWHILE

\end{algorithmic}
\end{algorithm} 

\subsection{Memory Cost of the First Scale Decomposition}
\label{fixedRankAnalysis}
Let $\mathcal{X}\in \mathbb{R}^{I_1\times I_2 \times .... \times I_N}$ be an $N$th order tensor. To simplify the notation, assume that the dimension of each mode is the same, i.e. $I_1 = I_2 = .... = I_N=I$. Assume  $\mathcal{X}$  is approximated by HoSVD as: 

\begin{equation}
\hat{\mathcal{X}} = \mathcal{C}_H\times_1 {\bf U}^{(1)}_H \times_2 {\bf U}^{(2)}_H...\times_N{\bf U}^{(N)}_H,
\end{equation}

 \noindent by fixing the rank of each mode matrix as $\mbox{rank}({\bf U}^{(i)}_H) = r_H$ for $i\in\left\lbrace 1,\;2,...,\; N\right\rbrace$. Let $\mathbb{F}(\cdot)$ be a function that quantifies the memory cost,  then the storage cost of ${\mathcal{X}}$ decomposed by HoSVD is $\mathbb{F}(\mathcal{C}_H) + \sum_{i=1}^N(\mathbb{F}({\bf U}^{(i)}_H))\approx r_H^N + N I r_H$. 

For multiscale analysis at scale 1, $\hat{\mathcal{X}}= \hat{\mathcal{X}}_0 + \hat{\mathcal{X}}_1 $.
The cost of storing $\hat{\mathcal{X}}_0$ is $\mathbb{F}(\mathcal{C}_0) + \sum_{i=1}^N(\mathbb{F}(\hat{\bf U}^{(i)})) \approx r_0^N + N I r_0$ where the rank of each mode matrix is fixed at $\mbox{rank}({\bf U}^{(i)}) = r_0$ for $i\in\left\lbrace 1,\;2,...,\; N\right\rbrace$. The cost of storing $\hat{\mathcal{X}}_1$ is the sum of the storage costs for each of the $K=\prod _{i=1}^N c(i)$ subtensors $\hat{\mathcal{X}}_{1,k}$. Assume $c(i)=c$ for all $i\in\left\lbrace 1,\;2,...,\; N\right\rbrace$ yielding $c^N$ equally sized subtensors, and that each $\hat{\mathcal{X}}_{1,k}$ is decomposed using the HoSVD as $\hat{\mathcal{X}}_{1,k} = \mathcal{C}_{1,k}\times_1 \hat{\bf U}^{(1)}_{1,k} \times_2 \hat{\bf U}^{(2)}_{1,k}...\times_N \hat{\bf U}^{(N)}_{1,k}$. Let the rank of each mode matrix be fixed as $\mbox{rank}(\hat{\bf U}^{(i)}_{1,k}) = r_1$ for all $i\in\left\lbrace 1,\;2,...,\; N\right\rbrace$ and $k\in\left\lbrace 1,\;2,...,\; K\right\rbrace$. Then, the memory cost for the first scale is $\sum_{k=1}^K \left(\mathbb{F}(\mathcal{C}_{1,k})+\sum_{i=1}^N\mathbb{F}(\hat{\bf U}^{(i)}_{1,k}) \right) \approx c^N \left(r^N_1+ \dfrac{N I r_1}{c}  \right)$. Choosing $r_1 \lesssim \dfrac{r_0}{c^{(N-1)}}$ ensures that the storage cost does not grow exponentially so that $\mathbb{F}(\hat{\mathcal{X}}_1)<\mathbb{F}(\hat{\mathcal{X}}_0)$ since the total cost becomes approximately equal to $r_0^N \left(1+\dfrac{1}{c^{N^2-2N}} \right)+2N I  r_0$. Thus, picking $r_0 \approx r_H/2$ can now provide lower storage cost for the first scale analysis than for HoSVD.

\subsection{Computational Complexity}
The computational complexity of MS-HoSVD at the first scale is equal to the sum of computational complexity of computing HoSVD at the parent node, partitioning into subtensors and computing HoSVD for each one of the subtensors.
Computational complexity of HoSVD of an N-way tensor $\mathcal{X}\in \mathbb{R}^{I_1\times I_2\times ...\times I_N}$ where ${I_1= I_2= ...= I_N =I}$ is $\mathcal{O}\left(N I^{(N+1)}\right)$ \cite{karami2012compression}.  By assuming that the partitioning is performed using K-means (via Lloyd's algorithm) with $c_i=c$ along each mode, the complexity partitioning along each mode is $\mathcal{O} \left( N   {I}^N  c  i \right)$, where $i$ is the number of iterations used in Lloyd's algorithm.  Finally, the total complexity of applying the HoSVD to $c^N$ equally sized subtensors is $\mathcal{O}\left(c^N  N {\left(I/c \right)}^{(N+1)}\right)$. 
Therefore, first scale MS-HoSVD has a total computational complexity of ${\mathcal O}\left(NI^{(N+1)} + N {I}^N  c  i + c^N N {\left(I/c \right)}^{(N+1)}\right) $.  Note that this complexity is similar to that of the HoSVD whenever $c  i$ is small compared to $I$. The runtime complexity of these multiscale methods can be reduced even further by  computing the HoSVDs for different subtensors in parallel whenever possible, as well as by utilizing distributed and parallel SVD algorithms such as \cite{iwen2016distributed} when computing all the required HoSVD decompositions.

\subsection{A Linear Algebraic Representation of the Proposed Multiscale HoSVD Approach}

Though the tree-based representation of the proposed MS-HoSVD approach used above in Algorithm~\ref{mshosvd_algo} is useful for algorithmic development, it is somewhat less useful for theoretical error analysis.  In this subsection we will develop formulas for the proposed MS-HoSVD approach which are more amenable to error analysis.  In the process, we will also formulate a criterion which, when satisfied, guarantees that the proposed fist scale MS-HoSVD approach produces an accurate multiscale approximation to a given tensor.

\noindent {\bf Preliminaries:}  We can construct full size first-scale subtensors of the residual tensor $\mathcal{W}_0 \in \mathbb{R}^ {I_1 \times I_2 \times ... I_N }$ from \eqref{equ:ResidualDef}, $\mathcal{X}|_k \in \mathbb{R}^ {I_1 \times I_2 \times ... I_N }$ for all $k \in [K]$, using the index sets $J_{1,k}^n$ from \eqref{equ:subtensor1} along with diagonal restriction matrices.  Let ${\bf R}_{k}^{(n)}\in \left\lbrace 0,\;1 \right\rbrace^{I_n\times  I_n}$ be the diagonal matrix with entries given by 
\begin{equation}
{\bf R}_{k}^{(n)}(i,j)= \begin{cases}
    1,& \text{if } i=j,\; \mbox{and} \;  j \in J_{1,k}^n \\        0,         & \text{otherwise}
\end{cases}
\label{equ:RestrictionMat}
\end{equation}
for all $k \in [K]$, and $n \in [N]$.  We then define
\begin{equation}
 \mathcal{X}|_k := \mathcal{W}_0 \bigtimes_{n=1}^N {\bf R}_{k}^{(n)} = \mathcal{W}_0 \times_1 {\bf R}_{k}^{(1)} \times_2 {\bf R}_{k}^{(2)}...\times_N {\bf R}_{k}^{(N)}.
\label{equ:subTensorDef}
\end{equation} 
Thus, the $k$th subtensor $\mathcal{X}|_k$ will only have nonzero entries, given by $\mathcal{W}_0(J_{1,k}^1\times...\times J_{1,k}^N)$, in the locations indexed by the sets $J_{1,k}^n$ from above.  The properties of the index sets $J_{1,k}^n$ furthermore guarantee that these subtensors all have disjoint support.  As a result both
\begin{equation}
\mathcal{W}_0 = \sum^K_{k = 1} \mathcal{X}|_k
\label{equ:SubTensorAddtoBigTensor}
\end{equation}
and
$$\left\langle \mathcal{X}|_k,\mathcal{X}|_j \right\rangle = 0 ~\textrm{for all}~ j,k \in [K] ~\textrm{with}~ j \neq k$$
will always hold.

Recall that we want to compute the HoSVD of the subtensors we form at each scale in order to create low-rank projection basis matrices along the lines of those in \eqref{equ:LowrankSubTensor}.  Toward this end we compute the top $r^{(n)}_{k}\leq rank({\bf R}_{k}^{(n)}) = |J^{n}_{1,k}|$ left singular vectors of the mode-n matricization of each $\mathcal{X}|_k$, ${\bf X|_k}_{(n)} \in \mathbb{R}^{I_n \times \prod_{m \neq n} I_m}$, for all $n \in [N]$.  Note that ${\bf X|_k}_{(n)} = {\bf R}_{k}^{(n)} {\bf X|_k}_{(n)}$ always holds for these matricizations since ${\bf R}_{k}^{(n)}$ is a projection matrix.\footnote{Here we are implicitly using \eqref{equ:KronModenFlat}.}  Thus, the top $r^{(n)}_{k}$ left singular vectors of ${\bf X|_k}_{(n)}$ will only have nonzero entries in locations indexed by   $J^{n}_{1,k}$.  Let $\hat{\bf U}^{(n)}_{k} \in \mathbb{R}^{I_n\times r^{(n)}_{k}}$ be the matrix whose columns are these top singular vectors.  As a result of the preceding discussion we can see that $\hat{\bf U}^{(n)}_{k} = {\bf R}_{k}^{(n)} \hat{\bf U}^{(n)}_{k}$ will hold for all $n \in [N]$ and $k \in [K]$.
Our low-rank projection matrices ${\bf Q}^{(n)}_k\in \mathbb{R}^{I_n\times  I_n}$ used to produce low-rank approximations of each subtensor $\mathcal{X}|_k$ can now be defined as 
\begin{equation}
{\bf Q}^{(n)}_k := \hat{\bf U}^{(n)}_{k} \left( \hat{\bf U}^{(n)}_{k} \right)^\top.
\label{equ:Scale1SubTensorProjMats}
\end{equation}

As a consequence of $\hat{\bf U}^{(n)}_{k} = {\bf R}_{k}^{(n)} \hat{\bf U}^{(n)}_{k}$ holding, combined with the fact that $\left( {\bf R}_{k}^{(n)} \right)^\top = {\bf R}_{k}^{(n)}$ since each ${\bf R}_{k}^{(n)}$ matrix is diagonal, we have that
\begin{equation}
{\bf Q}^{(n)}_k := \hat{\bf U}^{(n)}_{k} \left( \hat{\bf U}^{(n)}_{k} \right)^\top =  {\bf R}_{k}^{(n)}\hat{\bf U}^{(n)}_{k} \left( {\bf R}_{k}^{(n)}\hat{\bf U}^{(n)}_{k} \right)^\top = {\bf R}_{k}^{(n)}\hat{\bf U}^{(n)}_{k} \left( \hat{\bf U}^{(n)}_{k} \right)^\top \left( {\bf R}_{k}^{(n)} \right)^\top = {\bf R}_{k}^{(n)} {\bf Q}^{(n)}_k {\bf R}_{k}^{(n)}
\label{DefSubTensorProj}
\end{equation}
holds for all $n \in [N]$ and $k \in [K]$.  Using \eqref{DefSubTensorProj} combined with the fact that ${\bf R}_{k}^{(n)}$ is a projection matrix, we can further see that
\begin{equation}
\footnotesize
{\bf R}_{k}^{(n)} {\bf Q}^{(n)}_k = {\bf R}_{k}^{(n)} \left( {\bf R}_{k}^{(n)} {\bf Q}^{(n)}_k {\bf R}_{k}^{(n)} \right) =  {\bf R}_{k}^{(n)} {\bf Q}^{(n)}_k {\bf R}_{k}^{(n)}  = {\bf Q}^{(n)}_k = {\bf R}_{k}^{(n)} {\bf Q}^{(n)}_k {\bf R}_{k}^{(n)} = \left( {\bf R}_{k}^{(n)} {\bf Q}^{(n)}_k {\bf R}_{k}^{(n)} \right) {\bf R}_{k}^{(n)} = {\bf Q}^{(n)}_k {\bf R}_{k}^{(n)}
\label{equ:PropResProjStuff}
\end{equation}
also holds for all $n \in [N]$ and $k \in [K]$.\\ 

\noindent {\bf 1-scale Analysis of MS-HoSVD:} Using this linear algebraic formulation we are now able to re-express the the $1^{\rm st}$ scale approximation of $\mathcal{X} \in \mathbb{R}^ {I_1 \times I_2 \times ... I_N }$, $\hat{\mathcal{X}}_1 \in \mathbb{R}^ {I_1 \times I_2 \times ... I_N }$, as well as the $1^{\rm st}$ scale residual tensor tensor, ${\mathcal{W}}_1 \in \mathbb{R}^ {I_1 \times I_2 \times ... I_N }$, as follows (see \eqref{equ:FirstScaleApproxDef} -- \eqref{equ:FirstScaleResidualDef}).  We have that
\begin{align}
\hat{\mathcal{X}}_1 = \sum^K_{k = 1} \left( \mathcal{X}|_k \bigtimes_{n=1}^N {\bf Q}^{(n)}_k \right) & = \sum^K_{k = 1} \left( \mathcal{W}_0 \bigtimes_{n=1}^N {\bf Q}^{(n)}_k {\bf R}_{k}^{(n)} \right) &\left(\textrm{Using Lemma~\ref{lem:modeProdProps} and \eqref{equ:subTensorDef}} \right) \nonumber\\
& = \sum^K_{k = 1} \left( \mathcal{W}_0 \bigtimes_{n=1}^N {\bf Q}^{(n)}_k \right) & \left(\textrm{Using the properties in \eqref{equ:PropResProjStuff}} \right) \nonumber\\
& = \sum^K_{k = 1} \left( \left( \mathcal{X} - \hat{\mathcal{X}}_0 \right) \bigtimes_{n=1}^N {\bf Q}^{(n)}_k \right) & \left(\textrm{Using \eqref{equ:ResidualDef}} \right) 
\label{equ:Scale1ApproxDef}
\end{align}
holds.  Thus, we see that the residual error $\mathcal{W}_1$ from \eqref{equ:FirstScaleResidualDef} satisfies
\begin{equation}
\mathcal{X} = \hat{\mathcal{X}}_0+ \sum^K_{k = 1} \left( \left( \mathcal{X} - \hat{\mathcal{X}}_0 \right) \bigtimes_{n=1}^N {\bf Q}^{(n)}_k \right) + \mathcal{W}_1.
\label{equ:ErrorFormulaForScale1}
\end{equation}

Having derived \eqref{equ:ErrorFormulaForScale1}, it behooves us to consider when using such a first-scale approximation of $\mathcal{X}$ is actually better than, e.g., just using a standard HoSVD-based $0^{\rm th}$-scale approximation of $\mathcal{X}$ along the lines of \eqref{equ:ResidualDef}.  As one might expect, this depends entirely on $(i)$ how well the $1^{\rm st}$-scale partitions (i.e., the restriction matrices utilized in \eqref{equ:RestrictionMat}) are chosen, as well as on $(ii)$ how well restriction matrices of the type used in \eqref{equ:RestrictionMat} interact with the projection matrices used to create the standard HoSVD-based approximation in question.  Toward understanding these two conditions better, recall that $\hat{\mathcal{X}}_0 \in \mathbb{R}^ {I_1 \times I_2 \times ... I_N }$ in \eqref{equ:ErrorFormulaForScale1} is defined as 
\begin{equation}
\hat{\mathcal{X}}_0 = \mathcal{X} \bigtimes_{n=1}^N {\bf P}^{(n)} = \mathcal{X} \times_1 {\bf P}^{(1)} \times_2 {\bf P}^{(2)} \dots \times_N {\bf P}^{(N)}
\label{equ:LittleHoSVDapprox}
\end{equation}
where the orthogonal projection matrices ${\bf P}^{(n)} \in \mathbb{R}^{I_n \times I_n}$ are given by ${\bf P}^{(n)} = \hat{\bf U}^{(n)} \left( \hat{\bf U}^{(n)} \right)^\top$ for the matrices $\hat{\bf U}^{(n)}\in \mathbb{R}^{I_n\times r_n}$ used in \eqref{equ:DefScale0Approx}.  For simplicity let the ranks of the ${\bf P}^{(n)}$ projection matrices momentarily satisfy $r_1 = r_2 = \dots = r_N =: r_0$ (i.e., let them all be rank $r_0 < \max_n \{ {\rm rank}({\bf X}_{(n)}) \}$).  Similarly, let all 
the ranks, $r^{(n)}_{k}$, of the $1^{\rm st}$ scale projection matrices ${\bf Q}^{(n)}_k$ in \eqref{equ:Scale1SubTensorProjMats} be $r_1$ for the time being.  

Motivated by, e.g., the memory cost analysis of Section \ref{fixedRankAnalysis} above, one can now ask when the multiscale approximation error, $\| \mathcal{W}_1 \|$, resulting from \eqref{equ:ErrorFormulaForScale1} will be less than a standard HoSVD-based approximation error, $\| \mathcal{X} - \bar{\mathcal{X}}_0 \|$, where
\begin{equation}
\bar{\mathcal{X}}_0 := \mathcal{X} \bigtimes_{n=1}^N \bar{{\bf P}}^{(n)} = \mathcal{X} \times_1 \bar{\bf P}^{(1)} \times_2 \bar{\bf P}^{(2)} \dots \times_N \bar{\bf P}^{(N)},
\label{equ:BigHoSVDapprox}
\end{equation}
and each orthogonal projection matrix $\bar{{\bf P}}^{(n)}$ is of rank $\bar{r}_n = r_H \geq 2 r_0 \geq r_0 + c^{N-1}r_1$ \big(i.e., where each $\bar{{\bf P}}^{(n)}$ projects onto the top $r_H$ left singular vectors of ${\bf X}_{(n)}$\big).  In this situation having both $\| \mathcal{W}_1 \| < \| \mathcal{X} - \bar{\mathcal{X}}_0 \|$ and $r_H \geq 2 r_0 \geq r_0 + c^{N-1}r_1$ hold at the same time would imply that one could achieve smaller approximation error using MS-HoSVD than using HoSVD while simultaneously achieving better compression (recall Section \ref{fixedRankAnalysis}).  In order to help facilitate such analysis we prove error bounds in Appendix~\ref{sec:AppendixErrorBound} that are implied by the choice of a good partitioning scheme for the residual tensor $\mathcal{W}_0$ in \eqref{equ:RestrictionMat} -- \eqref{equ:SubTensorAddtoBigTensor}.

In particular, with respect to the question concerning how well the ${1}^{\rm st}$-scale approximation error, $\| \mathcal{W}_1 \|$, from \eqref{equ:ErrorFormulaForScale1} might compare to the HoSVD-based approximation error $\| \mathcal{X} - \bar{\mathcal{X}}_0 \|$, we can use the following notion of an \textit{effective partition of $\mathcal{W}_0$}. The partition of $\mathcal{W}_0$ formed by the restriction matrices ${\bf R}_{k}^{(n)}$ in \eqref{equ:RestrictionMat} -- \eqref{equ:SubTensorAddtoBigTensor} will be called \textit{effective} if there exists another \textit{pessimistic} partitioning of $\mathcal{W}_0$ via (potentially different) restriction matrices $\left\lbrace \tilde{\bf R}_{k}^{(n)}\right\rbrace^K_{k=1}$ together with a bijection $f:[K]\rightarrow[K]$ such that
\begin{equation}
\sum_{n=1}^N \left\| {\mathcal{X}}|_k\times_n \left({\bf I} - {\bf Q}^{(n)}_k \right) \right\|^2\leq \sum_{n=1}^N \left\| \mathcal{W}_0 \times_n \tilde{\bf R}_{f(k)}^{(n)} \left({\bf I}-\tilde{\bf P}^{(n)} \right)\bigtimes_{h\neq n}^N\tilde{\bf R}_{f(k)}^{(h)}\right\|^2
\label{equ:ExampleErrorCond}
\end{equation}
\noindent  holds for each $k\in [K]$.  In \eqref{equ:ExampleErrorCond} the $\left\lbrace \tilde{\bf P}^{(n)}\right\rbrace$ are the orthogonal projection matrices obtained from the HoSVD of $\mathcal{W}_0$ with ranks $\tilde{r}_n = r_H$ \big(i.e., where each $\tilde{{\bf P}}^{(n)}$ projects onto the top $\tilde{r}_n = r_H$ left singular vectors of the matricization ${\bf W}_{0,(n)}$\big).  
In Appendix~\ref{sec:AppendixErrorBound}, we  show that \eqref{equ:ExampleErrorCond} holding for $\mathcal{W}_0$ implies that the error $\| \mathcal{W}_1 \|$ resulting from our $1^{\rm st}$-scale approximation in \eqref{equ:ErrorFormulaForScale1} is less than an upper bound of the type often used for HoSVD-based approximation errors of the form $\| \mathcal{X} - \bar{\mathcal{X}}_0 \|$ (see, e.g., \cite{vannieuwenhoven2012new}).  In particular, we prove the following result.

\begin{thm}
Suppose that \eqref{equ:ExampleErrorCond} holds.  Then, the first scale approximation error given by MS-HoSVD in \eqref{equ:ErrorFormulaForScale1} is bounded by
\begin{align*}
\left\| \mathcal{W}_1 \right\|^2 = \left\| \mathcal{X}-\hat{\mathcal{X}}_0-\hat{\mathcal{X}}_1 \right\|^2 \leq \sum_{n=1}^N \left\| \mathcal{X}\times_n \left( {\bf I}-\bar{\bf P}^{(n)} \right) \right\|^2,
\end{align*}
where $\left\lbrace\bar{\bf P}^{(n)}\right\rbrace$ are low-rank  projection matrices of rank $\bar{r}_n = \tilde{r}_n = r_H$ obtained from the truncated HoSVD of $\mathcal{X}$ as per \eqref{equ:BigHoSVDapprox}.
\label{MainThm}
\end{thm}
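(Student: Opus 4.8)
The plan is to bound $\left\| \mathcal{W}_1 \right\|^2$ by peeling it into per-subtensor contributions, controlling each contribution through the effective-partition hypothesis \eqref{equ:ExampleErrorCond}, and then reassembling the resulting pessimistic-partition terms into a single global HoSVD-type tail for $\mathcal{X}$. First I would rewrite the residual: combining \eqref{equ:ErrorFormulaForScale1}, \eqref{equ:Scale1ApproxDef}, and \eqref{equ:SubTensorAddtoBigTensor} gives $\mathcal{W}_1 = \mathcal{W}_0 - \hat{\mathcal{X}}_1 = \sum_{k=1}^K \left( \mathcal{X}|_k - \mathcal{X}|_k \bigtimes_{n=1}^N {\bf Q}^{(n)}_k \right)$. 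The crucial structural observation is that, because ${\bf Q}^{(n)}_k = {\bf R}^{(n)}_k {\bf Q}^{(n)}_k {\bf R}^{(n)}_k$ by \eqref{DefSubTensorProj}, each summand is supported on the same index box $J^1_{1,k}\times\cdots\times J^N_{1,k}$ as $\mathcal{X}|_k$; since these boxes are disjoint across $k$, the summands are mutually orthogonal and $\left\| \mathcal{W}_1 \right\|^2 = \sum_{k=1}^K \left\| \mathcal{X}|_k - \mathcal{X}|_k \bigtimes_{n=1}^N {\bf Q}^{(n)}_k \right\|^2$.

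I would then apply the Pythagorean identity of Lemma~\ref{lem:TensorPythagorean} to each summand with the orthogonal projections ${\bf Q}^{(n)}_k$, and discard the intermediate-mode projections using that each ${\bf Q}^{(h)}_k$ is norm non-increasing in Frobenius norm while distinct-mode products commute (Lemma~\ref{lem:modeProdProps}(c)), obtaining the single-mode bound $\left\| \mathcal{X}|_k - \mathcal{X}|_k \bigtimes_{n} {\bf Q}^{(n)}_k \right\|^2 \le \sum_{n=1}^N \left\| \mathcal{X}|_k \times_n ({\bf I}-{\bf Q}^{(n)}_k) \right\|^2$. Next I invoke \eqref{equ:ExampleErrorCond} term-by-term in $k$ and sum over $k$. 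Because $f$ is a bijection of $[K]$, the right-hand sum re-indexes to a sum over all pessimistic subtensors, yielding $\left\| \mathcal{W}_1 \right\|^2 \le \sum_{k=1}^K \sum_{n=1}^N \left\| \mathcal{W}_0 \times_n \tilde{\bf R}^{(n)}_k ({\bf I}-\tilde{\bf P}^{(n)}) \bigtimes_{h\neq n}^N \tilde{\bf R}^{(h)}_k \right\|^2$.

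For fixed $n$, Lemma~\ref{lem:modeProdProps}(c)--(d) lets me recombine the mode-$n$ factors and rewrite each $n$-th summand as the $k$-th pessimistic subtensor $\left( \mathcal{W}_0 \times_n ({\bf I}-\tilde{\bf P}^{(n)}) \right) \bigtimes_{h=1}^N \tilde{\bf R}^{(h)}_k$ of the tensor $\mathcal{W}_0 \times_n ({\bf I}-\tilde{\bf P}^{(n)})$. Since the pessimistic restriction matrices form a genuine Cartesian-product partition of the full index set, these subtensors have disjoint support and sum to the whole; summing their squared norms over $k$ therefore collapses the inner sum and gives $\left\| \mathcal{W}_1 \right\|^2 \le \sum_{n=1}^N \left\| \mathcal{W}_0 \times_n ({\bf I}-\tilde{\bf P}^{(n)}) \right\|^2$.

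The last and most delicate step is to replace this $\mathcal{W}_0$-tail by the desired $\mathcal{X}$-tail. A naive route through Weyl's singular-value inequalities loses a shift of $r_0$ and points the wrong way, so instead I argue by suboptimality: since $\tilde{\bf P}^{(n)}$ is the \emph{best} rank-$r_H$ projection for the matricization ${\bf W}_{0,(n)}$, replacing it by the (generally suboptimal) rank-$r_H$ projection $\bar{\bf P}^{(n)}$ only increases the mode-$n$ error, so $\left\| \mathcal{W}_0 \times_n ({\bf I}-\tilde{\bf P}^{(n)}) \right\| \le \left\| \mathcal{W}_0 \times_n ({\bf I}-\bar{\bf P}^{(n)}) \right\|$. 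Writing $\mathcal{W}_0 = \mathcal{X} - \hat{\mathcal{X}}_0$ with $\hat{\mathcal{X}}_0$ from \eqref{equ:LittleHoSVDapprox}, whose matricization is $\hat{\bf X}_{0,(n)} = {\bf P}^{(n)} {\bf X}_{(n)} \left( \otimes_{m\neq n} {\bf P}^{(m)} \right)^\top$, I then use that ${\bf P}^{(n)}$ and $\bar{\bf P}^{(n)}$ project onto the top $r_0 \le r_H$ and top $r_H$ left singular vectors of the \emph{same} matrix ${\bf X}_{(n)}$, so ${\rm range}({\bf P}^{(n)}) \subseteq {\rm range}(\bar{\bf P}^{(n)})$ and hence $({\bf I}-\bar{\bf P}^{(n)}) {\bf P}^{(n)} = {\bf 0}$. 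This annihilates the correction term, leaving $\left\| \mathcal{W}_0 \times_n ({\bf I}-\bar{\bf P}^{(n)}) \right\| = \left\| \mathcal{X} \times_n ({\bf I}-\bar{\bf P}^{(n)}) \right\|$ and finishing the bound. I expect this final passage—from the residual's own HoSVD tail to $\mathcal{X}$'s HoSVD tail without incurring Weyl-type losses—to be the main obstacle; the resolution rests on pairing the suboptimality of $\bar{\bf P}^{(n)}$ for $\mathcal{W}_0$ with the range-nesting of the two truncations, which is available precisely because both projections are built from the singular vectors of $\mathcal{X}$ itself.
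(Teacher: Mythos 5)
Your proposal is correct and follows essentially the same route as the paper's own proof: the disjoint-support decomposition of $\mathcal{W}_1$ into per-subtensor errors, the Pythagorean bound via Lemma~\ref{lem:TensorPythagorean}, the application of \eqref{equ:ExampleErrorCond} with the bijection $f$ and the collapse over the pessimistic partition, and finally the pairing of Eckart--Young optimality of $\tilde{\bf P}^{(n)}$ for ${\bf W}_{0,(n)}$ with the range-nesting identity $({\bf I}-\bar{\bf P}^{(n)}){\bf P}^{(n)} = {\bf 0}$ to convert the $\mathcal{W}_0$-tail into the $\mathcal{X}$-tail. The only cosmetic difference is that the paper packages the first three steps as a separate lemma and routes the last step through an intermediate projection $\bar{\bf Q}^{(n)}$, which coincides with $\bar{\bf P}^{(n)}$ in the stated setting $\bar{r}_n = \tilde{r}_n = r_H$.
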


\begin{proof}
See Appendix~\ref{sec:AppendixErrorBound}.
\end{proof}

Theorem~\ref{MainThm} implies that $\left\| \mathcal{W}_1 \right\|$ {\it may be less} than $\| \mathcal{X} - \bar{\mathcal{X}}_0 \|$ when \eqref{equ:ExampleErrorCond} holds.  It does not, however, actually prove that $\left\| \mathcal{W}_1 \right\| \leq \| \mathcal{X} - \bar{\mathcal{X}}_0 \|$ holds whenever \eqref{equ:ExampleErrorCond} does.  In fact, directly proving that $\left\| \mathcal{W}_1 \right\| \leq \| \mathcal{X} - \bar{\mathcal{X}}_0 \|$ whenever \eqref{equ:ExampleErrorCond} holds does not appear to be easy.  It also does not appear to be easy to prove the error bound in theorem~\ref{MainThm} without an assumption along the lines of \eqref{equ:ExampleErrorCond} which simultaneously controls both $(i)$ how well the restriction matrices utilized to partition $\mathcal{W}_0$ in \eqref{equ:subTensorDef} are chosen, as well as $(ii)$ how poorly (worst case) restriction matrices interact with the projection matrices used to create standard HoSVD-based approximations of $\mathcal{W}_0$ and/or $\mathcal{X}$.  The development of simpler and/or weaker conditions than \eqref{equ:ExampleErrorCond} which still yield meaningful error guarantees along the lines of theorem~\ref{MainThm} is left as future work.  See Appendix~\ref{sec:AppendixErrorBound} for additional details and comments, and Appendix~\ref{app:Example} below for an example illustrating Theorem~\ref{MainThm} on an idealized tensor..

\subsection{Adaptive Pruning in Multiscale HoSVD for Improved Performance}

In order to better capture the local structure of the tensor, it is important to look at higher scale decompositions. However, as the scale increases, the storage cost and computational complexity will increase making any gain in reconstruction error potentially not worth the additional memory cost. For this reason, it is important to carefully select the subtensors adaptively at higher scales. To help avoid the redundancy in decomposition structure we propose an adaptive pruning method across scales.

In adaptive pruning, the tree is pruned by minimizing the following cost function $\mathbb{H} = Error + \lambda \cdot Compression$ similar to the rate-distortion criterion commonly used by compression algorithms where $\lambda$ is the trade-off parameter \cite{ramchandran1993best}. To minimize this function we employ a greedy procedure similar to sequential forward selection \cite{pudil1994floating}. First, the root node which stores $\hat{\mathcal{X}}_0$ is created  and scale-1 subtensors $\hat{\mathcal{X}}_{1,k}$ are obtained from the 0th order residual tensor $\hat{\mathcal{W}}_0$ as discussed in Section \ref{multiScaleMethod}. These subtensors are stored in a list and the subtensor which decreases the cost function the most is then added to the tree structure under its parent node. Next, scale-2 subtensors belonging to the added node are created and added to the list. All of the scale-1 and scale-2 subtensors  in the list are again evaluated to find the subtensor that minimizes the cost function. This procedure is repeated until the cost function $\mathbb{H}$ converges or the decrease is minimal.  A pseudocode of the algorithm is given in Algorithm \ref{mshosvd_p_algo}. It is important to note that this algorithm is suboptimal similar to other greedy search methods.

\begin{algorithm}[h]
\small
\caption{Multiscale HoSVD with Adaptive Pruning}
\label{mshosvd_p_algo}
\begin{algorithmic}[1]
\STATE Input:  ${\mathcal{X}}$: tensor ,  ${\bf C}= \left( c_1,\; c_2,\; ...,\; c_N\right)$: the desired number of clusters for each modes, $s_H$: the highest scale of MS-HoSVD. 
\STATE Output:  $\mathit{T}$: Tree structure containing the MS-HoSVD decomposition of $\hat{\mathcal{X}}$.
\STATE Create an empty tree $\mathit{T}$.
\STATE Create an empty list $\mathit{L}$.
\STATE Add node containing ${\mathcal{X}}$ to $\mathit{L}$.




\WHILE {There is a node in $\mathit{L}$ that decreases the cost function $\mathbb{H}(\mathit{T})$.}
\STATE Find the node corresponding to ${\mathcal{X}}_{s,t}$ (the $t$th subtensor from $s$th scale) in the list $\mathit{L}$   that decreases $\mathbb{H}$ the  most where $s\in \left\lbrace 0,..., s_H\right\rbrace$ and $t\in \left\lbrace 1,..., K^{s}\right\rbrace$.
\STATE $\mathcal{C}_{s,t}$, $\left\lbrace \hat{\bf U}^{(n)}_{s,t}\right\rbrace \leftarrow $ truncatedHOSVD($\mathcal{X}_{s,t}$).

\STATE Add the node containing $\mathcal{C}_{s,t}$, $\left\lbrace \hat{\bf U}^{(n)}_{s,t}\right\rbrace$ to $\mathit{T}$.
\IF {$s<s_H$}
\STATE Compute $\mathcal{W}_{s,t}={\mathcal{X}}_{s,t}-\hat{\mathcal{X}}_{s,t}$.
\STATE Create $K$ subtensors $\mathcal{X}_{s+1,K(t-1)+k}$  with $J_{s+1,K(t-1)+k}^n$ from $\mathcal{W}_{s,t}$ where $k\in \left\lbrace 1,\;2,\;...,\;K \right\rbrace$ and $n\in \left\lbrace 1,\;2,\;...,\;N \right\rbrace$. 
\STATE Add $K$ nodes containing $\mathcal{X}_{s+1,K(t-1)+k}$  and $\left\lbrace J_{s+1,K(t-1)+k}^n\right\rbrace$ to  $\mathit{L}$.
\ENDIF
\ENDWHILE
\end{algorithmic}
\end{algorithm}

\section{Data Reduction}
\label{sec:DataCompress}

In this section we demonstrate the performance of MS-HoSVD for tensor type data reduction on several real 3-mode and 4-mode datasets as compared with three other tensor decompositions: HoSVD, H-Tucker, and T-Train. The performance of tensor decomposition methods are evaluated in terms of reconstruction error and compression rate. In the tables and figures below the error rate refers to the normalized tensor approximation error $\frac{\| \mathcal{X} - \hat{\mathcal{X}} \|_F}{\| \mathcal{X} \|_F}$ and the compression rate is computed as $\frac{\# {\rm ~total~ bits~ to~ store~ } \hat{\mathcal{X}}}{\# {\rm ~total~ bits~ to~ store~ } \mathcal{X}}$.
Moreover, we show the performance of the proposed adaptive tree pruning strategy for data reduction. 


\subsection{Datasets}

\subsubsection{PIE dataset}
\label{pie-datadescription}
A 3-mode tensor $\mathcal{X}\in \mathbb{R}^{ 244\times 320\times 138}$ is created from PIE dataset \cite{sim2003cmu}. The tensor contains 138 images from 6 different yaw angles and varying illumination conditions collected from a subject where each image is  converted to gray scale. Fig. \ref{pieSample} illustrates the images from different frames of the PIE dataset.

\begin{figure}[H]
\centering
\includegraphics[width=8 cm]{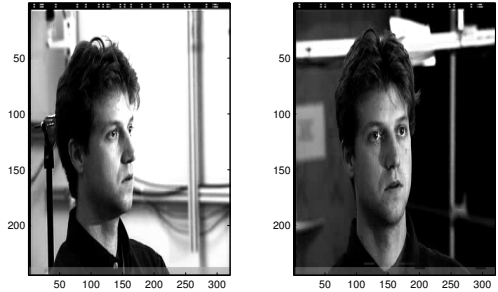}
\caption{Sample frames from PIE dataset corresponding to the 30th (left) and 80th (right) frames.}
\label{pieSample}
\end{figure}

\subsubsection{COIL-100 dataset}

The COIL-100 database contains 7200 images collected from 100 objects where the images of each object were taken at pose intervals of $5^{\circ}$. 
A 4-mode tensor $\mathcal{X}\in \mathbb{R}^{ 128\times 128\times 72 \times 100}$ is created from COIL-100 dataset \cite{nene1996columbia}. The constructed 4-mode tensor contains 72 images of size $128\times 128$ from 100 objects where each image is converted to gray scale. In Fig. \ref{coil100_sample}, sample images of four objects taken from different angles can be seen. 


\begin{figure}[H]
\centering
\includegraphics[width=7 cm]{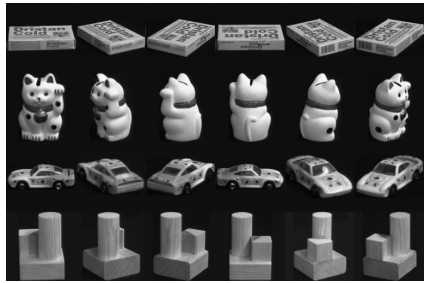}
\caption{Image samples of four different objects from COIL-100 dataset from varying pose angles (from $0^{\circ}$ to $240^{\circ}$ with $60^{\circ}$ increments).  }
\label{coil100_sample}
\end{figure}

\subsubsection{The Cambridge Hand Gesture Dataset}
The Cambridge hand gesture database consists of 
900 image sequences of nine gesture classes of three primitive hand shapes and three primitive motions where each class contains 100 image sequences (5 different illuminations $\times$ 10
arbitrary motions $\times$ 2 subjects). In Fig. \ref{HG_sample}, sample image sequences collected for nine hand gestures can be seen. The created 4-mode tensor $\mathcal{X}\in \mathbb{R}^{ 60\times 80\times 30 \times 900}$ contains 900 image sequences of size $60\times 80 \times 30$ where each image is converted to gray scale. 


\begin{figure}[H]
\centering
\includegraphics[width=8 cm]{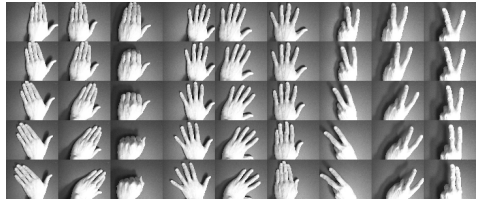}
\caption{Illustration of nine different classes in Cambridge Hand Gesture Dataset.}
\label{HG_sample}
\end{figure}

\subsection{Data Reduction Experiments}

In this section, we evaluate the performance of MS-HoSVD for 1 and 2-scale decompositions compared to HoSVD, H-Tucker and T-Train decompositions. In the following experiments, tensor partitioning is performed by LSA  and the cluster number along each mode is chosen as $c_i=2$. The rank used in HoSVD is  selected adaptively using the energy criterion as per Section \ref{multiScaleMethod}'s \eqref{equ:tauDef}. In our experiments, we performed MS-HoSVD with $\tau= 0.7$ and $\tau= 0.75$ and we kept $\tau$ the same for each scale. For the same compression rates as the MS-HoSVD, the reconstruction error of HoSVD, H-Tucker and T-Train models are computed.

Fig. \ref{data_reduction_bar} explores the interplay between compression rate and approximation error for MS-HoSVD in comparison to HoSVD, H-Tucker and T-Train for PIE, COIL-100 and hand gesture datasets. Starting from the left in Figs.  \ref{data_reduction_bar}(a), \ref{data_reduction_bar}(b) and \ref{data_reduction_bar}(c), the first two compression rates correspond to 1-scale MS-HoSVD with $\tau= 0.7$ and $\tau= 0.75$, respectively while the last two are obtained from 2-scale approximation with $\tau= 0.7$ and $\tau= 0.75$, respectively. As seen in Fig.  \ref{data_reduction_bar},  MS-HoSVD outperforms other approaches with respect to reducing  PIE, COIL-100 and hand gesture tensors at varying compression rates. Moreover, adding the $2^{\rm nd}$ scale  increases the storage requirements while decreasing the error of MS-HoSVD. Fig. \ref{piefig} illustrates the influence of scale on the visual quality of the reconstructed images.  As expected, introducing additional finer scales into a multiscale approximation of video data improves image detail in each frame. Moreover, the data reduction performance of T-Train is seen to be slightly better than H-Tucker in most of the experiments.

\begin{figure}[H]
\footnotesize{\centering}
\begin{minipage}[b]{0.48\linewidth}
\subfloat{\includegraphics[width=8 cm]{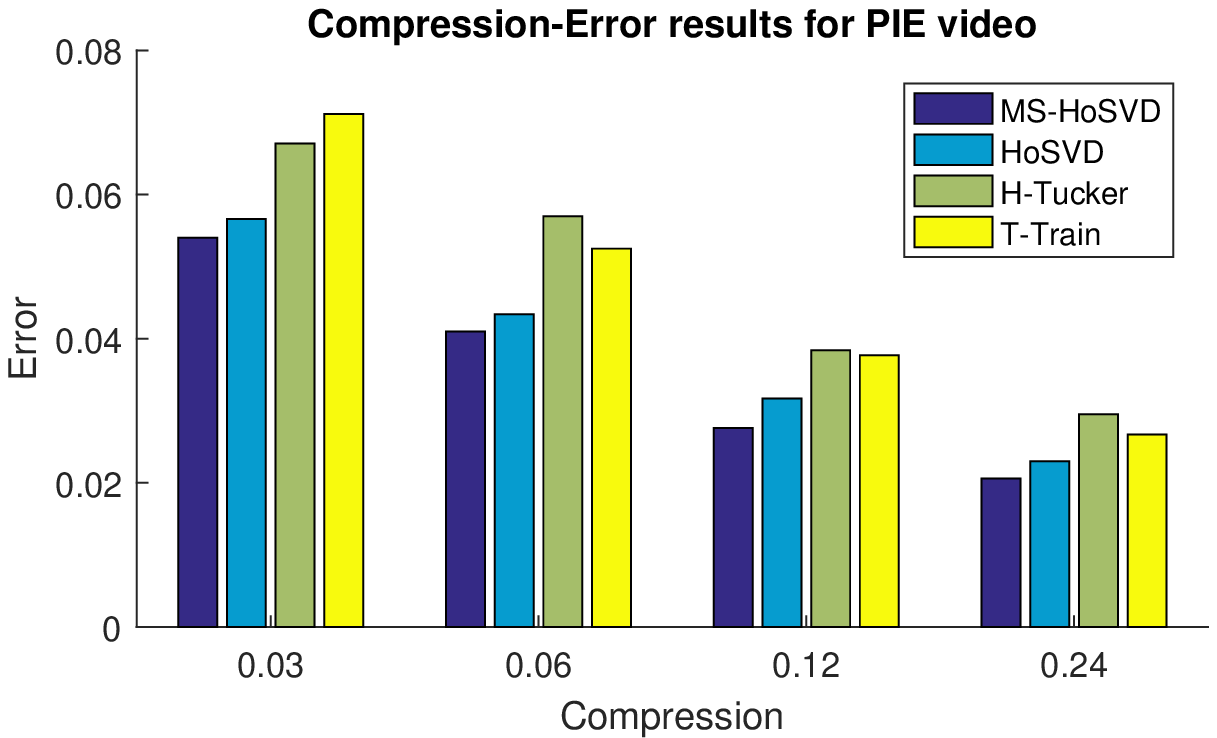}}\\
\centering (a) PIE \\
\end{minipage}
\hfill
\begin{minipage}[b]{0.48\linewidth}
\subfloat{\includegraphics[width=8 cm]{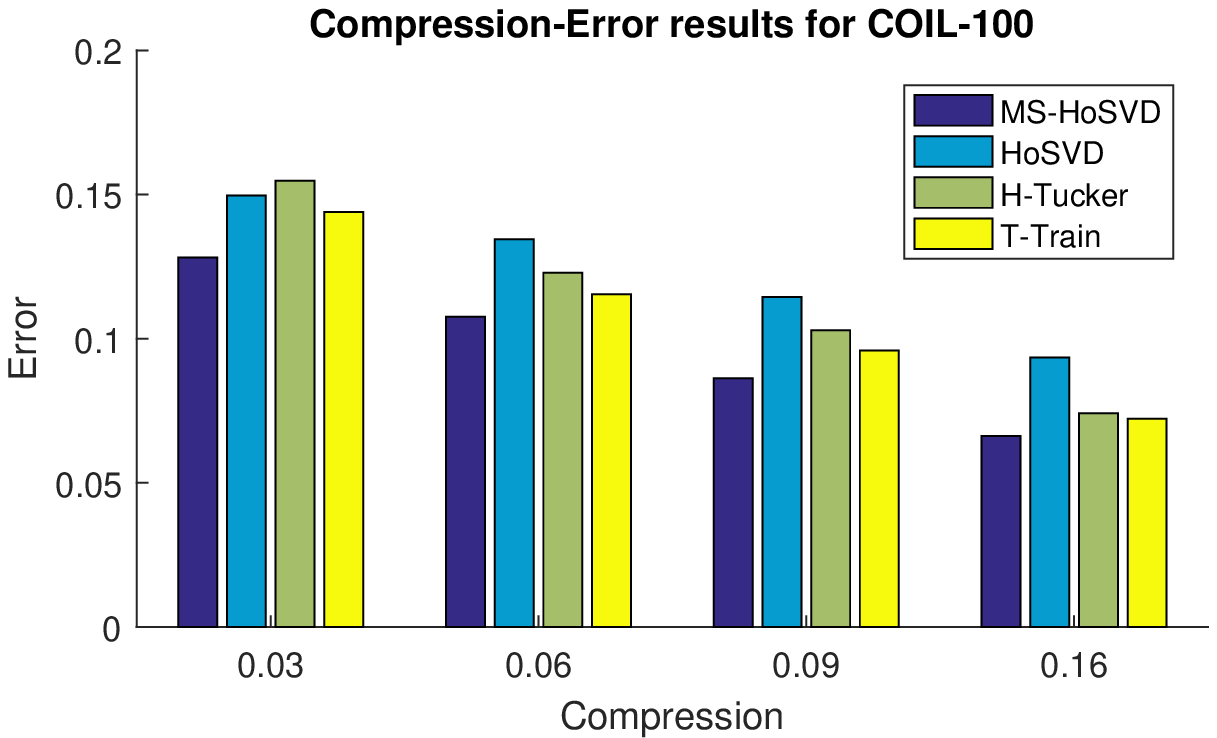}}\\
\centering (b) COIL-100\\
\end{minipage}
\\
\centering
\begin{minipage}[b]{0.48\linewidth}
\subfloat{\includegraphics[width=8 cm]{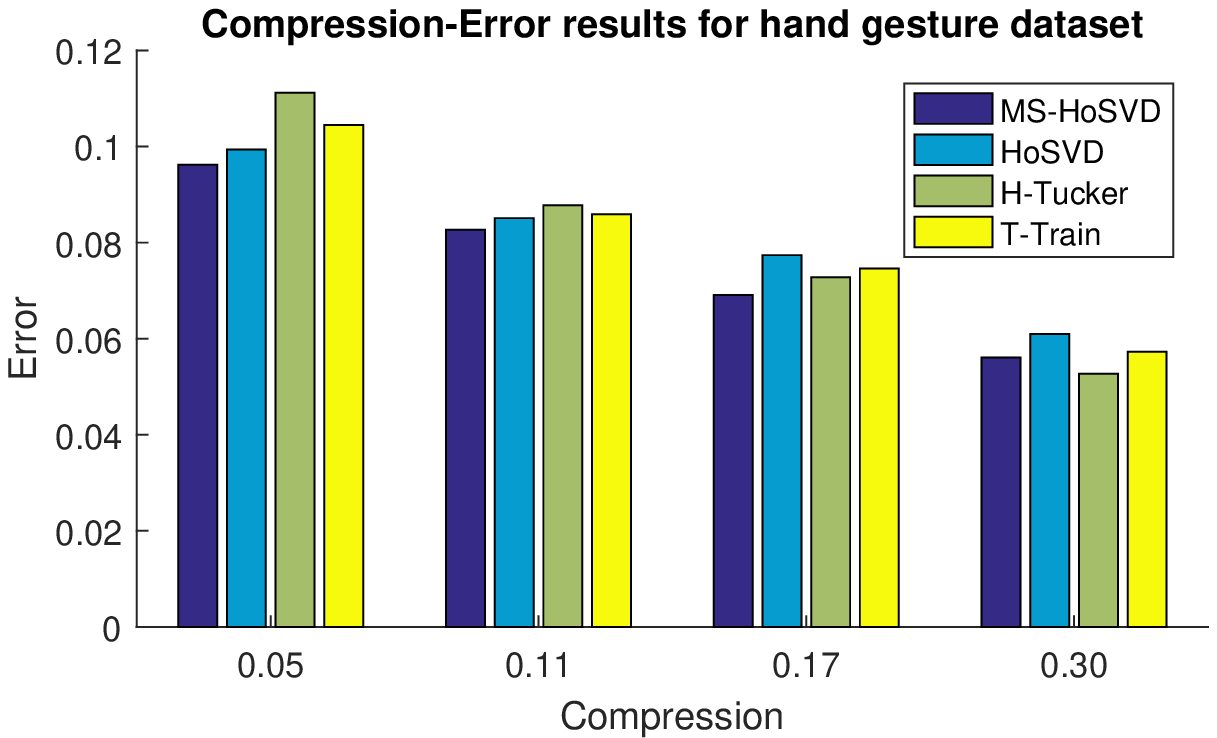}}\\
\centering (c) Hand Gesture\\
\end{minipage}



\caption{Compression rate versus Normalized Reconstruction Error  for MS-HoSVD (dark blue), HoSVD (light blue), H-Tucker (green) and T-Train (yellow) for a) PIE, b) COIL-100 and c) Hand Gesture datasets. Starting from the left for all (a), (b) and (c), the first two compression rates correspond to 1-scale MS-HoSVD with $\tau= 0.7$ and $\tau= 0.75$ while the last two are obtained from 2-scale approximation with $\tau= 0.7$ and $\tau= 0.75$, respectively. MS-HoSVD provides lower error than HoSVD, H-Tucker and T-Train.}
\label{data_reduction_bar}
\end{figure}

\begin{figure}[H]
\centering
\includegraphics[width=12cm]{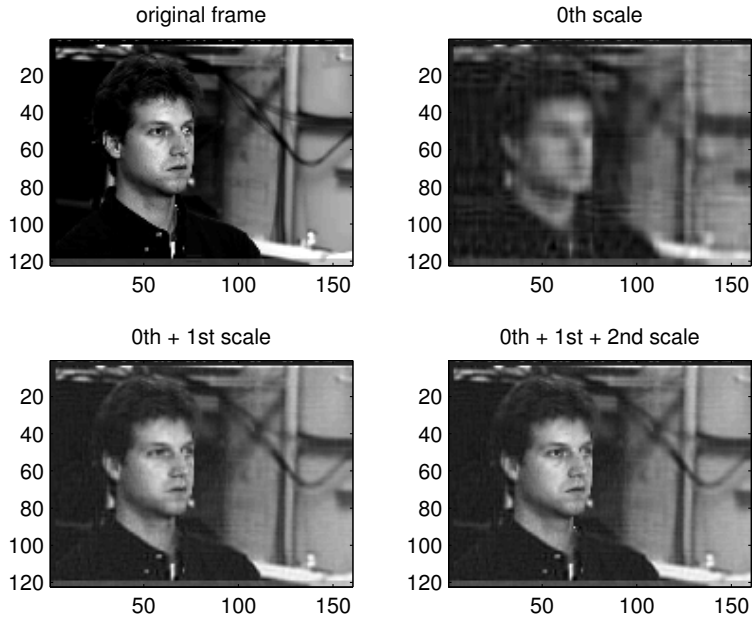}
\caption{A single frame of the PIE dataset showing increasing accuracy with scale for MS-HoSVD.}
\label{piefig}
\end{figure}

\subsection{Data Reduction with Adaptive Tree Pruning}

In this section, we evaluate the performance of adaptive tree pruning multiscale decompositions. In the pruning experiments, clustering is performed by LSA and the cluster number along each mode is chosen as $c_i=2$. The rank used in HoSVD is selected adaptively based on the energy threshold $\tau = 0.7$. A pruned version of 2-scale MS-HoSVD that greedily minimizes the  cost function $\mathbb{H}= Error + \lambda\cdot Compression$ for is implemented for PIE, COIL-100 and Hand Gesture datasets with varying $\lambda$ values as reported in Tables  \ref{PIEpruning}, \ref{COILpruning} and \ref{HGpruning}.  As $\lambda$ increases, reducing the compression rate becomes more important and the algorithm prunes the leaf nodes more.  For example, a choice of  $\lambda = 0.75$ prunes all of the nodes corresponding to the  second scale subtensors for PIE data (see Table \ref{PIEpruning}).

As can be seen from Tables \ref{PIEpruning}, \ref{COILpruning}, and \ref{HGpruning}, the best tradeoffs achieved between reconstruction error and compression rate occur at different $\lambda$ values for different datasets. For example, for PIE data, increasing $\lambda$ value does not provide much change in reconstruction error while increasing the compression. On the other hand, for COIL-100, $\lambda=0.75$ provides a good tradeoff between reconstruction error and compression rate. Small changes in $\lambda$ yield significant effects on pruning the subtensors of 2-scale decomposition of hand gesture data. 
Fig. \ref{piefigprune} illustrates the performance of the pruning algorithm on the PIE dataset. Applying pruning with $\lambda=0.25$ increases the reconstruction error from 0.0276 to 0.0506 while reducing the compression rate by a factor of 4 (Table \ref{PIEpruning}). As seen in Fig. \ref{piefigprune}, the $2^{\rm nd}$-scale approximation obtained by the adaptive pruning algorithm preserves most of the facial details in the image.

\begin{table}[h]
\footnotesize
\centering
\caption{Reconstruction error and compression rate computed for pruned tree structure obtained by applying MS-HoSVD with 2 scales to PIE data.}

\begin{tabular}{c||ccccccc}

 \hline
   $\lambda$     &0  & 0.22 & 0.25 &0.30  &0.75    \\ 

\hline

Normalized error   &0.0276 & 0.0395 &0.0506   & 0.0530 & 0.0540   \\
Compression   & 0.1241 & 0.0809 & 0.0377  & 0.0284  & 0.0261   \\
Scales of  subtensors & 0+1+2   & 0+1+2 & 0+1+2 &0+1+2  & 0+1   \\
\hline
\end{tabular}
\label{PIEpruning}
\end{table}

\begin{table}[h]
\footnotesize
\centering
\caption{Reconstruction error and compression rate computed for pruned tree structure obtained by applying MS-HoSVD with 2 scales to COIL-100 dataset.}

\begin{tabular}{c||cccccc}

 \hline
   $\lambda$    & 0  &  0.25 & 0.50 & 0.75 &0.80  \\ 
\hline
        
Normalized error   &  0.0857  &0.0867  & 0.0913 & 0.1060    &  0.1207     \\
Compression   & 0.0863  &0.0840   & 0.0734 & 0.0526  &  0.0347    \\        
Scales of subtensors & 0+1+2 & 0+1+2 & 0+1+2  &0+1+2  & 0+1+2   \\
\hline
\end{tabular}
\label{COILpruning}
\end{table}

\begin{table}[h]
\footnotesize
\centering
\caption{Reconstruction error and compression rate computed for pruned tree structure obtained by applying MS-HoSVD with 2 scales to Hand Gesture dataset.}
\begin{tabular}{c||cccccc}

 \hline
   $\lambda$    & 0  &  0.25 & 0.26 & 0.27 &0.28  \\ 
\hline
        
Normalized error  &  0.0691  &0.0869  & 0.0913 & 0.0946    &  0.0999     \\
Compression   & 0.1694  &0.1056   & 0.0827 & 0.0698  &  0.0514    \\        
Scales of subtensors & 0+1+2 & 0+1+2 & 0+1+2  &0+1+2  & 0+1   \\
\hline
\end{tabular}
\label{HGpruning}
\end{table}

\begin{figure}[H]
\centering
\includegraphics[width=12cm]{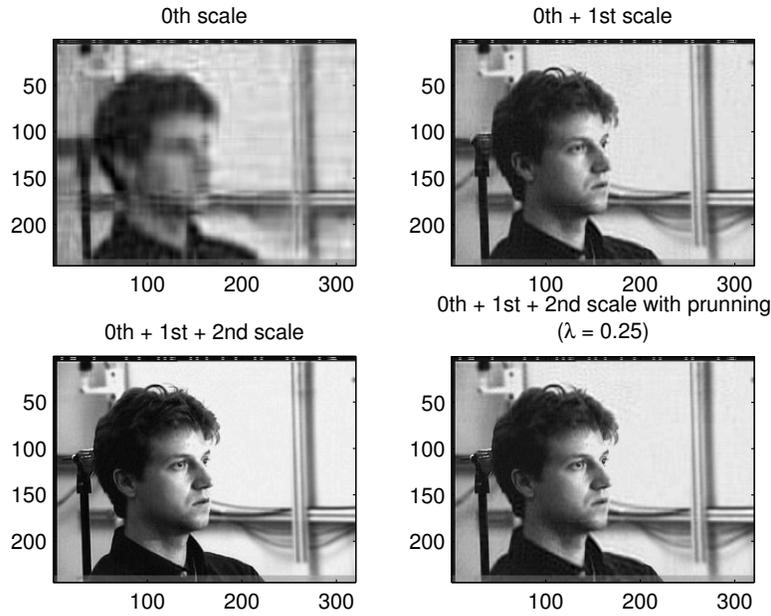}
\caption{Reconstruction error and compression rate computed for the pruned tree structure obtained by applying MS-HoSVD with 2 scales to the PIE dataset. The top-left and right images are sample frames obtained by reconstructing the tensor using only the $0^{\rm th}$ scale, and $0^{\rm th}$ and $1^{\rm st}$ scales, respectively.  The bottom-left image is a sample frame reconstructed using the $2$-scale approximation with all the sub-tensors, and the bottom-right image is the reconstruction using the $2$-scale analysis with the pruning approach where $\lambda=0.25$.}
\label{piefigprune}
\end{figure}

Performance of the pruning algorithm reported in Tables \ref{PIEpruning}, \ref{COILpruning} and \ref{HGpruning} is also compared with HoSVD, H-Tucker and T-Train decompositions in Fig. \ref{data_reduction_bar_ap}. As seen in Fig.  \ref{data_reduction_bar_ap} (b) and (c),  MS-HoSVD outperforms other approaches for compressing COIL-100 and Hand Gesture datasets at varying compression rates. However, for PIE data, the performance of MS-HoSVD and HoSVD are very close to each other while both approaches outperform H-Tucker and T-Train, as can be seen in Fig.  \ref{data_reduction_bar_ap} (a).
In Fig. \ref{piefigprune_comparison}, sample  frames of PIE data reconstructed by T-Train (top-left), H-Tucker (top-right), HoSVD (bottom-left) and pruned MS-HoSVD with 2-scales (bottom-right) are shown. It can be easily seen that the reconstructed images by H-Tucker and T-Train are more blurred than the ones obtained by HoSVD and MS-HoSVD. One can also see the facial details captured by MS-HoSVD are clearer than HoSVD although the performances of both algorithms are very similar to each other. The reason for capturing facial details better by MS-HoSVD is that the higher-scale subtensors encode facial details.

\begin{figure}[H]
\footnotesize{\centering}
\begin{minipage}[b]{0.48\linewidth}
\subfloat{\includegraphics[width=8 cm]{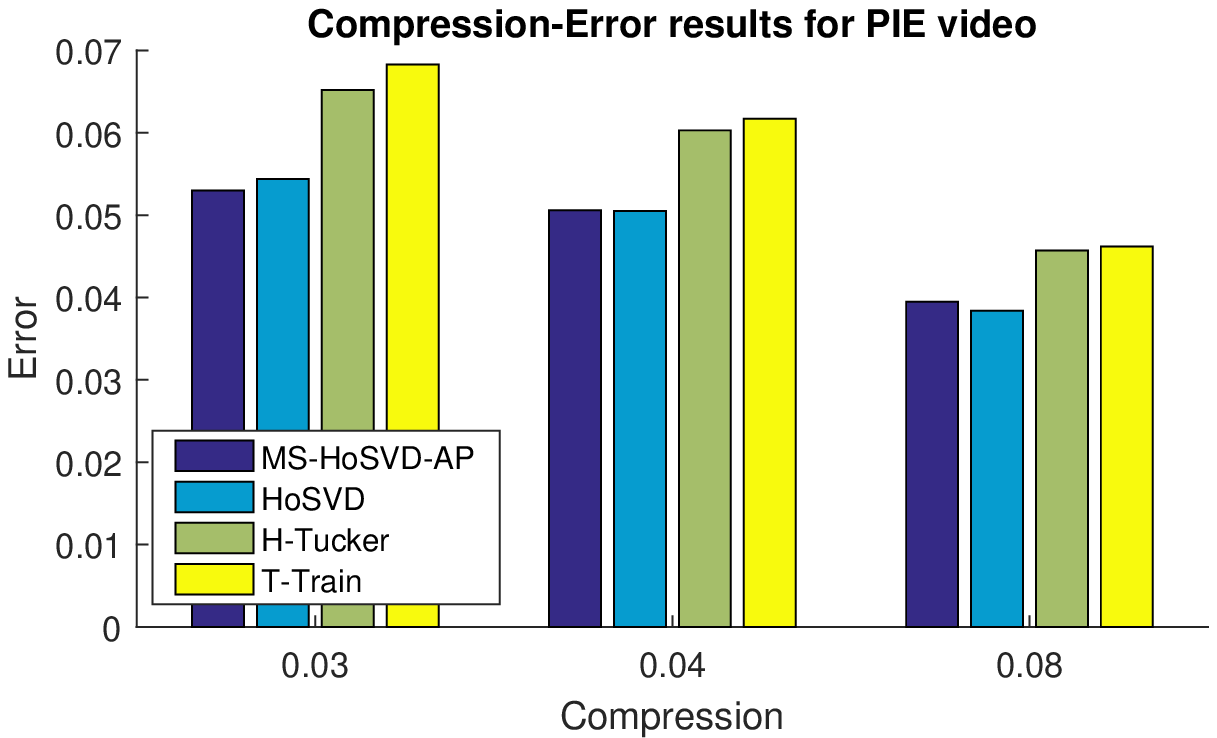}}\\
\centering (a) PIE \\
\end{minipage}
\hfill
\begin{minipage}[b]{0.48\linewidth}
\subfloat{\includegraphics[width=8 cm]{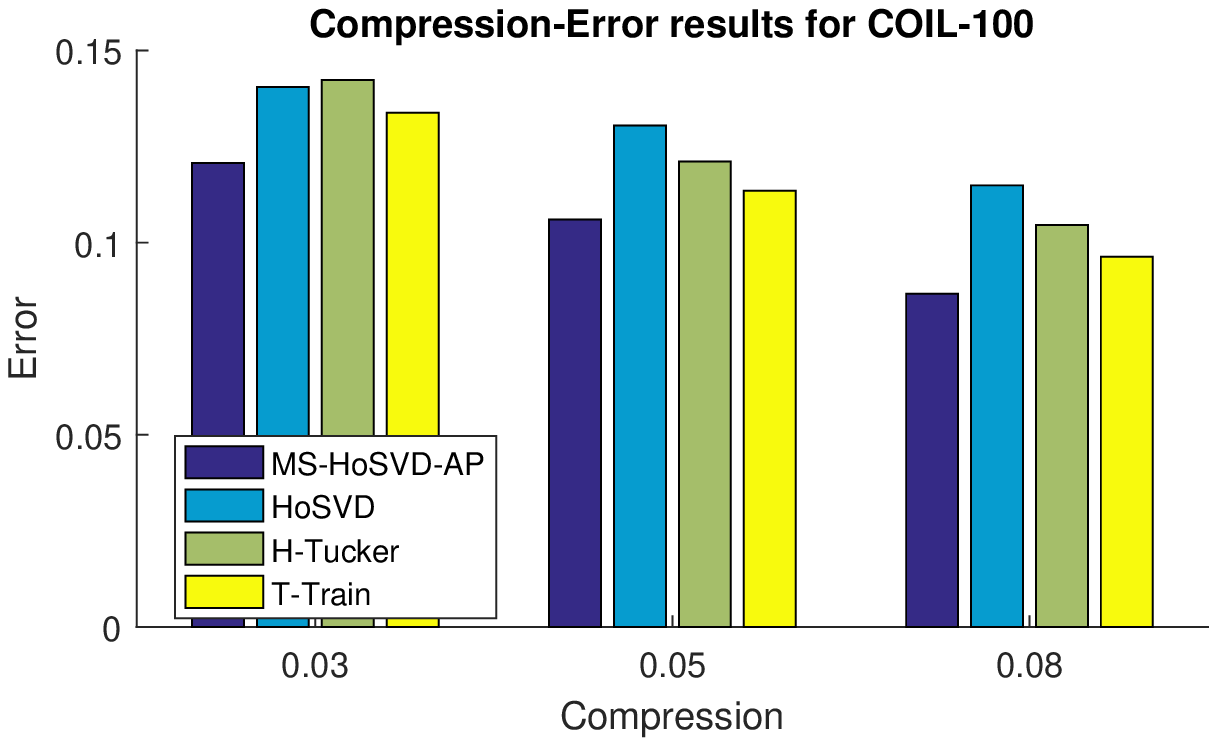}}\\
\centering (b) COIL-100\\
\end{minipage}
\centering
\begin{minipage}[b]{0.48\linewidth}
\subfloat{\includegraphics[width=8 cm]{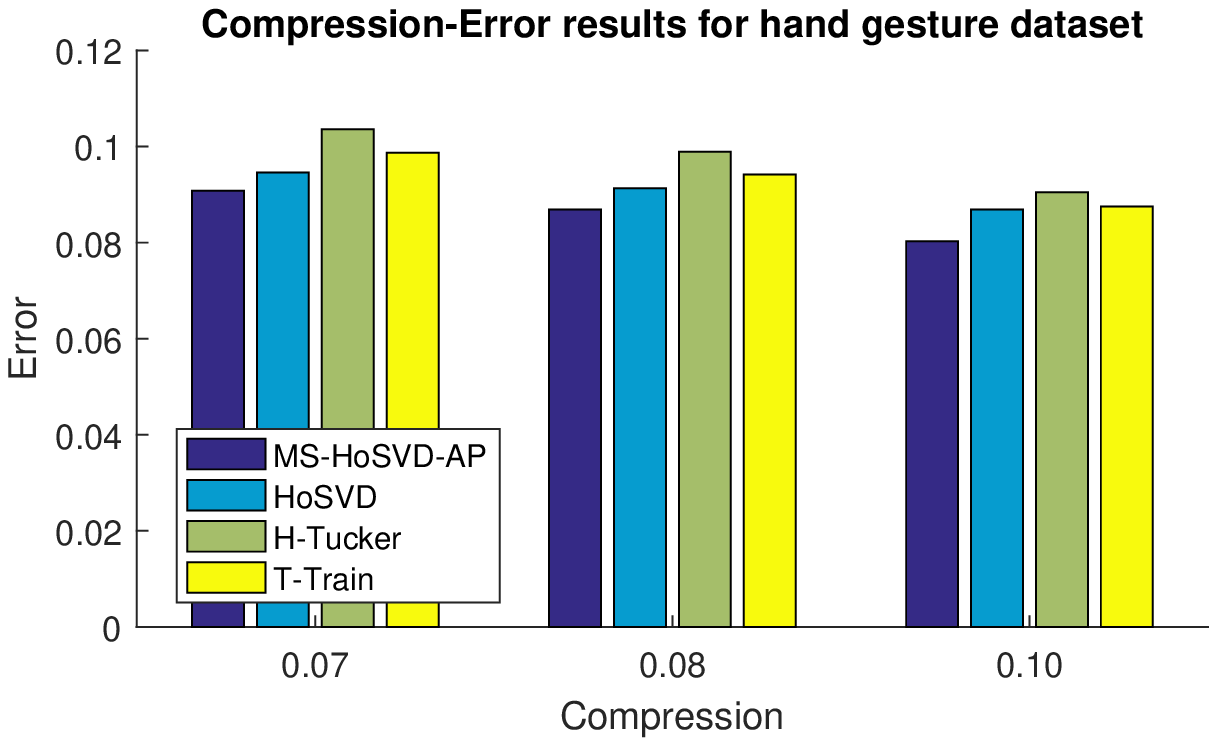}}\\
\centering (c) Hand Gesture\\
\end{minipage}


\caption{Compression rate versus Normalized Reconstruction Error for MS-HoSVD with adaptive pruning (dark blue), HoSVD (light blue), H-Tucker (green) and T-Train (yellow) for a) PIE, b) COIL-100 and c) Hand Gesture datasets. 2-scale MS-HoSVD tensor approximations are obtained using $\tau= 0.7$ for each scale and varying pruning trade-off parameter $\lambda$.}
\label{data_reduction_bar_ap}
\end{figure}

\begin{figure}[H]
\centering
\includegraphics[width=12cm]{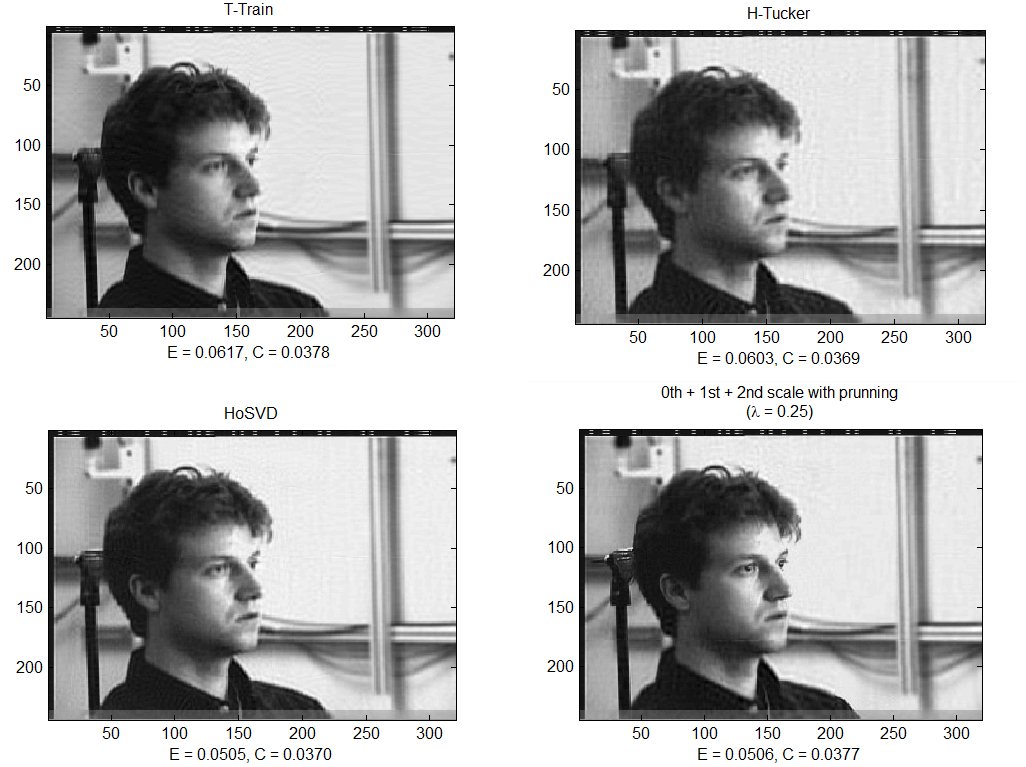}
\caption{Reconstructed frame samples from PIE data compressed by T-Train (top-left), H-Tucker (top-right), HoSVD
(bottom-left) and pruned MS-HoSVD with 2-scales (bottom-right). 2-scale MS-HoSVD tensor approximation is obtained using $\tau= 0.7$ for each scale and $\lambda=0.25$.}
\label{piefigprune_comparison}
\end{figure}

\section{Feature Extraction and Classification}
\label{sec:Classification}

In this section, we evaluate the features extracted from MS-HoSVD for classification of 2-mode and 3-mode tensors containing object images and hand gesture videos. 

The classification accuracy of MS-HoSVD features are compared to the features extracted by HoSVD and T-Train using three different classifiers: 1-NN, Adaboost and Naive Bayes.

\subsection{COIL-100 Image Dataset}
For computational efficiency, each image was downsampled to a gray-scale image of $32 \times 32$ pixels.  Number of images per object used for training data was gradually increased from 18 to 54 and selected randomly. A 3-mode tensor $\mathcal{X}^{tr}\in \mathbb{R}^{32\times 32\times I_{tr}}$ is constructed from training images  where $I_{tr} \in 100\times \left\lbrace  18,\; 36, \; 54 \right\rbrace$ and the rest of the images are used to create the testing tensor $\mathcal{X}^{te}\in \mathbb{R}^{32\times 32\times I_{te}}$  where $I_{te}=7200-I_{tr}$. 

\subsection{The Cambridge Hand Gesture Dataset}


For computational efficiency, each image  was downsampled to a gray-scale image   of $30 \times 40$ pixels. Number of image sequences used for training data gradually increased from 25 to 75 per gesture and selected randomly. A 4-mode tensor $\mathcal{X}^{tr}\in \mathbb{R}^{30\times 40\times  30 \times I_{tr}}$ is constructed from training image sequences  where $I_{tr} \in 9\times \left\lbrace 25,\; 50,\; 75 \right\rbrace$ and the rest of the image sequences are used to create the testing tensor $\mathcal{X}^{te}\in \mathbb{R}^{30\times 40\times  15\times I_{te}}$  where $I_{te}=900-I_{tr}$.
\subsection{Classification Experiments} 
\subsubsection{Training} 

For MS-HoSVD, the training tensor $\mathcal{X}^{tr}$ is decomposed using 1-scale MS-HoSVD as follows. Tensor partitioning is performed by LSA  and the cluster number along each mode is chosen as $c={\left\lbrace 2,\;3,\; 1\right\rbrace}$ yielding 6 subtensors for COIL-100 dataset and $c={\left\lbrace 2,\;2,\;3,\; 1\right\rbrace}$ yielding 12 subtensors for hand gesture dataset. We did not partition the tensor along the last mode that corresponds to the classes to make the comparison with other methods fair. The rank used in 0th scale is selected based on the energy criterion with $\tau=0.7$, while the full rank decomposition is used for the 1st scale. The 0th scale approximation
\begin{equation}
\hat{\mathcal{X}}_0^{tr}= \mathcal{C}_0^{tr}\times_1 \hat{\bf U}^{{tr},(1)} \times_2 \hat{\bf U}^{{tr},(2)}...\times_N \hat{\bf U}^{{tr},(N)}
\end{equation}
\noindent provides the $0^{\rm th}$-scale core tensor $\mathcal{C}_0^{tr}$, factor matrices $\hat{\bf U}^{tr,(i)}$ and residual tensor $\mathcal{W}_0^{tr}= \mathcal{X}^{tr}- \hat{\mathcal{X}}_0^{tr}$. Next, the $0^{\rm th}$-scale feature tensor $\mathcal{S}^{tr}_0$ for the training data is created by projecting ${\mathcal{X}}^{tr}$s onto the first $N-1$ factor matrices ${\bf U}^{tr,(i)}$ as:
\begin{equation}
\mathcal{S}^{tr}_0= {\mathcal{X}}^{tr} \times_1 \left(\hat{\bf U}^{tr,(1)}\right)^{\top} \times_2 \left(\hat{\bf U}^{tr,(2)}\right)^{\top}...\times_{N-1} \left(\hat{\bf U}^{tr,(N-1)}\right)^{\top}.
\end{equation}


Subtensors of 
$\mathcal{W}_0^{tr}$ obtained by $\mathcal{X}^{tr}_{1,k}= \mathcal{W}_0^{tr}(J^{tr,1}_{1,k}\times J^{tr,2}_{1,k} \times ...\times J^{tr,N}_{1,k})$
are used to extract $1^{st}$-order core tensors $\mathcal{C}_{1,k}$ and factor matrices ${\bf U}^{{tr},(i)}_{1,k}$ as: 
\begin{equation}
{\mathcal{X}}_{1,k}^{tr} = \mathcal{C}_{1,k}^{tr}\times_1 {\bf U}^{{tr},(1)}_{1,k} \times_2 {\bf U}^{{tr}(2)}_{1,k}...\times_N {\bf U}^{{tr}(N)}_{1,k}.
\end{equation}

$1st$-order feature tensors are then created by projecting ${\mathcal{X}}^{tr}_{1,k}$s onto the first $N-1$ factor matrices ${\bf U}^{tr,(i)}_{1,k}$ as:
\begin{equation}
\mathcal{S}^{tr}_{1,k}= {\mathcal{X}}^{tr}_{1,k} \times_1 \left({\bf U}^{tr,(1)}_{1,k}\right)^{\top} \times_2 \left({\bf U}^{tr,(2)}_{1,k}\right)^{\top}...\times_{N-1} \left({\bf U}^{tr,(N-1)}_{1,k}\right)^{\top}.
\end{equation}

Unfolding the feature tensors $\mathcal{S}^{tr}_0$ and $\mathcal{S}^{tr}_{1,k}$ along the sample mode $N$  and concatenating them to each other yields a high dimensional feature vector for each of the training samples. From these vectors, $N_f$ features with the highest  Fisher Score \cite{gu2012generalized} are selected to form the lower-dimensional feature vectors ${\bf x}^{tr}\in \mathbb{R}^{N_f\times 1}$ for each training sample where the number of features ($N_f$) is determined 100 for COIL-100 and 200 for hand gesture dataset emprically. For HoSVD and T-Train, full rank decompositions are computed and feature vectors are created by selecting $N_f$  features with the highest Fisher Score from the core tensors as described above. For T-Train, the procedure described in \cite{bengua2016matrix} is used without reducing the dimensionality.

\subsubsection{Testing}

To create the $0^{\rm th}$-order feature tensor $\mathcal{S}^{te}_0$ for testing samples, first, the testing tensor $\mathcal{X}_{te}$ is projected onto  $\hat{\bf U}^{tr,(i)}$ where $i\in \left[N-1\right]$ as:
\begin{equation}
\mathcal{S}^{te}_0= {\mathcal{X}}^{te} \times_1 \left(\hat{\bf U}^{tr,(1)}\right)^{\top} \times_2 \left(\hat{\bf U}^{tr,(2)}\right)^{\top}...\times_{N-1} \left(\hat{\bf U}^{tr,(N-1)}\right)^{\top}.
\end{equation}

The $0^{\rm th}$-order residual tensor $\mathcal{W}_0^{te}$ of testing data is computed as $\mathcal{W}_0^{te}={\mathcal{X}}^{te}- {\mathcal{X}}^{te}\bigtimes_{n=1}^{n=N}\left(\hat{\bf U}^{tr,(n)}\right)^{\top}$. Then $1^{\rm st}$-order subtensors are created from $\mathcal{W}_0^{te}$ using the same partitioning as the 0th order training residual tensor $\mathcal{W}_0^{tr}$ as $\mathcal{X}^{te}_{1,k}= \mathcal{W}_0^{te}(J^{tr,1}_{1,k}\times J^{tr,2}_{1,k} \times ...\times J^{tr,N}_{1,k})$. The $1^{\rm st}$-order feature tensors $\mathcal{S}^{te}_{1,k}$ for the testing samples are then obtained by  
\begin{equation}
\mathcal{S}^{te}_{1,k}= {\mathcal{X}}^{te}_{1,k} \times_1 \left({\bf U}^{tr,(1)}_{1,k}\right)^{\top} \times_2 \left({\bf U}^{tr,(2)}_{1,k}\right)^{\top}...\times_{N-1} \left({\bf U}^{tr,(N-1)}_{1,k}\right)^{\top}.
\end{equation}

Similar to the training step, unfolding the feature tensors $\mathcal{S}^{te}_0$ and $\mathcal{S}^{te}_{1,k}$ along the sample mode $N$  and concatenating them with each other yields high dimensional feature vectors for the testing samples. The  features  corresponding to the features selected from the training step are used to form the feature vectors for testing samples ${\bf x}^{te}\in \mathbb{R}^{N_f\times 1}$. 
A similar two-step procedure, i.e projecting the testing tensor onto training factor matrices  followed by selecting $N_f$ features, is used to create testing feature vectors  for HoSVD and T-Train. Discrimination performance of the  feature vectors are evaluated using different classifiers including 1-NN, Adaboost and Naive Bayes. 
 

Tables \ref{objectClassification} and \ref{HGClassification} summarize the classification accuracy for the three methods using three different classifiers for COIL-100 and Hand gesture data sets, respectively. As it can be seen from these Tables, for both data sets and all classifiers MS-HoSVD performs the best except for a Naive Bayes Classifier trained by $25\%$ of the data to classify hand gesture dataset. As seen in Tables \ref{objectClassification} and \ref{HGClassification}, the performance of HoSVD, T-Train and MS-HoSVD become close to each other as  the size of the training dataset increases, as expected. The reason for the superior performance of MS-HoSVD is that MS-HoSVD captures the variations and nonlinearities across the modes such as rotation or translation better than the other methods. In both of the datasets used in this section, the images are rotated across different frames. Since these nonlinearities are encoded in the higher-scale ($1^{\rm st}$-scale) features  while the average characteristics, which are the same as HoSVD, are captured by the lower scale ($0^{\rm th}$-scale) MS-HoSVD features, the classification performance of the MS-HoSVD is slightly better than HoSVD. It is also seen that T-Train features are not as good as MS-HoSVD and HoSVD features for capturing rotations and translations in the data and requires larger training set to reach the performance of MS-HoSVD and HoSVD.



\begin{table}[H]
\footnotesize
\centering
\caption{Classification results for COIL-100 dataset over 20 trials with $N_f=100$.}

\begin{tabular}{c|c||cccccccccc|}

 \hline
\multirow{ 2}{*}{Training Size} & \multirow{ 2}{*}{Method} & \multicolumn{2}{|c}{1-NN}& \multicolumn{2}{|c}{Adaboost} &\multicolumn{2}{|c}{Bayes}\\ 
&  & \multicolumn{2}{|c}{mean $\pm $ std.}& \multicolumn{2}{|c}{mean $\pm $ std.}& \multicolumn{2}{|c}{mean $\pm $ std.}\\
\hline

\multirow{ 3}{*}{25$\%$}  

& MS-HoSVD & \multicolumn{2}{|c}{\bf 93.71 $\pm $ 1.28}& \multicolumn{2}{|c}{\bf 88.90 $\pm $ 1.24 } &\multicolumn{2}{|c}{\bf 89.88 $\pm $ 2.08 }\\

& HoSVD & \multicolumn{2}{|c}{93.07 $\pm $ 1.33}& \multicolumn{2}{|c}{87.47 $\pm $ 1.53 }&\multicolumn{2}{|c}{ 87.36 $\pm $ 3.32 }\\
 
 & T-Train & \multicolumn{2}{|c}{92.29 $\pm $ 2.23}& \multicolumn{2}{|c}{ 87.26 $\pm $ 1.84 }&\multicolumn{2}{|c}{ 88.43 $\pm $ 1.36 }\\

 \hline
\multirow{ 3}{*}{50$\%$}  

& MS-HoSVD & \multicolumn{2}{|c}{\bf 97.41 $\pm $ 0.69}& \multicolumn{2}{|c}{ \bf 92.54 $\pm $ 1.58}&\multicolumn{2}{|c}{\bf 91.62 $\pm $ 1.21 }\\

& HoSVD & \multicolumn{2}{|c}{97.10 $\pm $ 0.83}& \multicolumn{2}{|c}{ 91.29 $\pm $ 2.16 }&\multicolumn{2}{|c}{ 90.43 $\pm $ 1.60 }\\

 & T-Train & \multicolumn{2}{|c}{96.99 $\pm $ 1.09}& \multicolumn{2}{|c}{ 91.36 $\pm $ 1.44 }&\multicolumn{2}{|c}{  91.21 $\pm $ 1.66 }\\

 \hline
\multirow{ 3}{*}{ 75$\%$}  

& MS-HoSVD & \multicolumn{2}{|c}{ \bf 98.38 $\pm $ 0.65}& \multicolumn{2}{|c}{\bf 93.66 $\pm $1.35 }&\multicolumn{2}{|c}{\bf 92.60 $\pm $ 1.79 }\\

& HoSVD & \multicolumn{2}{|c}{98.16 $\pm $ 0.72}& \multicolumn{2}{|c}{ 92.23 $\pm $1.46  }&\multicolumn{2}{|c}{ 92.38 $\pm $ 1.92 }\\

 & T-Train & \multicolumn{2}{|c}{98.25 $\pm $ 0.41}& \multicolumn{2}{|c}{ 93.10 $\pm $1.33 }&\multicolumn{2}{|c}{ 92.33 $\pm $ 1.52 }\\

\hline

\end{tabular}
\label{objectClassification}
\end{table}

\begin{table}[H]
\footnotesize
\centering
\caption{Classification results for hand gesture dataset over 20 trials with $N_f=200$.}

\begin{tabular}{c|c||cccccccc|}

 \hline
\multirow{ 2}{*}{Training Size} & \multirow{ 2}{*}{Method} & \multicolumn{2}{|c}{1-NN}& \multicolumn{2}{|c}{ Adaboost}&\multicolumn{2}{|c}{Bayes}\\  
&  & \multicolumn{2}{|c}{mean $\pm $ std.}& \multicolumn{2}{|c}{mean $\pm $ std.}& \multicolumn{2}{|c}{mean $\pm $ std.}\\
\hline

\multirow{ 3}{*}{25$\%$}  
& MS-HoSVD & \multicolumn{2}{|c}{ \bf 75.40 $\pm $ 3.87}& \multicolumn{2}{|c}{\bf 77.81 $\pm $ 1.87}& \multicolumn{2}{|c}{82.89 $\pm $ 1.84}\\

& HoSVD & \multicolumn{2}{|c}{75.01 $\pm $ 3.99}& \multicolumn{2}{|c}{77.37 $\pm $ 2.51}& \multicolumn{2}{|c}{\bf 83.11 $\pm $ 2.89}\\
 & T-Train & \multicolumn{2}{|c}{69.20 $\pm $ 2.63}& \multicolumn{2}{|c}{67.53 $\pm $ 2.27}& \multicolumn{2}{|c}{ 72.99 $\pm $ 4.91}\\

 \hline
\multirow{ 3}{*}{50$\%$}  
& MS-HoSVD & \multicolumn{2}{|c}{\bf 83.86 $\pm $ 3.12}& \multicolumn{2}{|c}{\bf 85.46 $\pm $ 1.66}&  \multicolumn{2}{|c}{\bf 85.86 $\pm $ 2.25}\\

& HoSVD & \multicolumn{2}{|c}{83.15 $\pm $ 2.90}& \multicolumn{2}{|c}{85.14 $\pm $ 1.45}& \multicolumn{2}{|c}{84.00 $\pm $ 1.68}\\
 & T-Train & \multicolumn{2}{|c}{78.97 $\pm $ 2.25}& \multicolumn{2}{|c}{ 78.82 $\pm $ 2.87}& \multicolumn{2}{|c}{ 81.46 $\pm $ 1.37}\\

 \hline
\multirow{ 3}{*}{ 75$\%$} 
 & MS-HoSVD & \multicolumn{2}{|c}{\bf 87.47 $\pm $ 2.07}& \multicolumn{2}{|c}{\bf 88.15 $\pm $ 2.20}& \multicolumn{2}{|c}{\bf 86.75 $\pm $ 2.63}\\

& HoSVD & \multicolumn{2}{|c}{86.93 $\pm $ 2.31}& \multicolumn{2}{|c}{88.08 $\pm $ 2.51}& \multicolumn{2}{|c}{85.04 $\pm $ 2.41}\\
 & T-Train & \multicolumn{2}{|c}{85.64 $\pm $ 2.57}& \multicolumn{2}{|c}{ 82.60 $\pm $ 3.16}& \multicolumn{2}{|c}{83.64 $\pm $ 2.31}\\

\hline

\end{tabular}
\label{HGClassification}
\end{table}

\section{Conclusions}

In this paper, we proposed a new multi-scale tensor decomposition technique for better approximating the local nonlinearities in generic tensor data. The proposed approach constructs a tree structure by considering similarities along different fibers of the tensor and decomposes the tensor into lower dimensional subtensors hierarchically. A low-rank approximation of each subtensor is then obtained by HoSVD. We also introduced a pruning strategy to find the optimum tree structure by keeping the important nodes and  eliminating redundancy in the data. The proposed approach is applied to a set of $3$-way and $4$-way tensors to evaluate its performance on both data reduction and classification applications. As it is illustrated in sections \ref{sec:DataCompress} and \ref{sec:Classification}, any application involving tensor data reduction and classification would benefit from the proposed method. Some examples include hyper-spectral image compression, high-dimensional video clustering and functional connectivity network analysis in neuroscience.

Although this paper focused on the integration of a single existing tensor factorization technique (i.e., the HoSVD) into a clustering-enhanced multiscale approximation framework, we would like to emphasize that the ideas presented herein are significantly more general.  In principal, for example, there is nothing impeding the development of multiscale variants of other tensor factorization approaches (e.g., PARAFAC, T-Train, H-Tucker, etc.) in essentially the same way.  In this paper it is demonstrated that the use of the HoSVD as part of a multiscale approximation approach leads to improved compression and classification performance over standard HoSVD approaches.  However, this paper should additionally be considered as evidence that similar improvements are also likely possible for other tensor factorization-based compression and classification schemes, as well as for other related applications.


Future work will consider automatic selection of parameters such as the number of clusters  and the appropriate rank along each mode. The computational efficiency of the proposed method can also be improved through parallelization of the algorithm by, e.g., constructing the disjoint subtensors at each scale in parallel, as well as by utilizing distributed and parallel SVD algorithms such as \cite{iwen2016distributed} when computing their required HoSVD decompositions (see also, e.g., \cite{liavas2015parallel} for other related parallel implementations). Such efficient implementations will enable the computation of finer-scale decompositions for higher-order and higher-dimensional tensors.

\bibliographystyle{IEEEtran}
\bibliography{ConsensusReferences,trackingRefs,tenDec}
\appendix

\subsection{Effective Partitioning, and Error Analysis}
\label{sec:AppendixErrorBound}

In order to facilitate error analysis for the 1-scale MS-HoSVD that is similar to the types of error analysis available for various HoSVD-based low-rank approximation strategies (see, e.g., \cite{vannieuwenhoven2012new}), we will engage in a more in depth discussion of condition \eqref{equ:ExampleErrorCond} herein.  Recall that the partition of $\mathcal{W}_0$ formed by the restriction matrices ${\bf R}_{k}^{(n)}$ in \eqref{equ:RestrictionMat} -- \eqref{equ:SubTensorAddtoBigTensor} is called \textit{effective} if there exists another \textit{pessimistic} partitioning of $\mathcal{W}_0$ via restriction matrices
$\left\lbrace \tilde{\bf R}_{k}^{(n)}\right\rbrace^K_{k=1}$ together with a bijection $f:[K]\rightarrow[K]$ such that
\begin{equation}
\sum_{n=1}^N \left\| {\mathcal{X}}|_k\times_n \left({\bf I} - {\bf Q}^{(n)}_k \right) \right\|^2\leq \sum_{n=1}^N \left\| \mathcal{W}_0 \times_n \tilde{\bf R}_{f(k)}^{(n)} \left({\bf I}-\tilde{\bf P}^{(n)} \right)\bigtimes_{h\neq n}^N\tilde{\bf R}_{f(k)}^{(h)}\right\|^2
\label{eqn_definition}
\end{equation}
\noindent  holds for each $k\in [K]$. In \eqref{eqn_definition} the $\left\lbrace \tilde{\bf P}^{(n)}\right\rbrace$ are the orthogonal projection matrices obtained from the HoSVD of $\mathcal{W}_0$ with ranks $\tilde{r}_n \geq \bar{r}_n \geq r_n$ \big(i.e., where each $\tilde{{\bf P}}^{(n)}$ projects onto the top $\tilde{r}_n$ left singular vectors of the matricization ${\bf W}_{0,(n)}$\big).  
Below we will show that \eqref{eqn_definition} holding for $\mathcal{W}_0$ implies that the error $\| \mathcal{W}_1 \|$ resulting from our $1^{\rm st}$-scale approximation in \eqref{equ:ErrorFormulaForScale1} is less than an upper bound of the type given for a high-rank standard HoSVD-based approximation \eqref{equ:BigHoSVDapprox} in \cite{vannieuwenhoven2012new}.

Considering condition \eqref{eqn_definition} above, we note that experiments show that it is regularly satisfied on real datasets when $(i)$ the effective restriction matrices $\left\lbrace {\bf R}_{k}^{(n)} \right\rbrace^K_{k=1}$ in \eqref{equ:RestrictionMat} -- \eqref{equ:SubTensorAddtoBigTensor} are first formed by clustering the rows of each unfolding of $\mathcal{W}_0$ using, e.g., local subspace analysis (LSA), after which $(ii)$ pessimistic restriction matrices $\left\lbrace \tilde{\bf R}_{k}^{(n)}\right\rbrace^K_{k=1}$ are randomly generated in order to create another (random) partition of $\mathcal{W}_0$ into $K$ different disjoint subtensors for comparison.  The bijection $f$ can then be created by, e.g., $(i)$ sorting the left-hand side errors in \eqref{eqn_definition} for each $k \in [k]$, $(ii)$ sorting the right-hand side errors in \eqref{eqn_definition} for each $k \in [K]$, and then $(iii)$ matching the largest left-hand and right-hand errors for comparision, the second largest left-hand and right-hand errors for comparision, etc..  When checked in this way the sorted right-hand side errors often dominate (entrywise) the sorted left-hand side errors for various reasonable ranks $\bar{r}_n = \tilde{r}_n = r_H = r_0 + c^{N-1}r_1$ (as a function of $r_0$ and $r_1$ with, e.g., $c = 2$) on every dataset considered in Section~\ref{sec:DataCompress} above, thereby verifying that \eqref{eqn_definition} does indeed regularly hold.


We will now begin to prove Theorem~\ref{MainThm} with a lemma that shows our subtensor-based approximation of $\mathcal{W}_0$ is accurate whenever \eqref{eqn_definition} is satisfied.

\begin{lem}
	Let $\mathcal{W}_0 = \mathcal{X} - \hat{\mathcal{X}}_0 \in \mathbb{R}^ {I_1 \times I_2 \times ... I_N }$.  Suppose that $\left\lbrace {\bf R}_{k}^{(n)}\right\rbrace$ is a collection of effective restriction matrices that form an effective partition of $\mathcal{W}_0$ with respect to a pessimistic partition formed via pessimistic restriction matrices $\left\lbrace \tilde{\bf R}_k^{(n)} \right\rbrace$ as per \eqref{eqn_definition} above.
	Similarly, let $\tilde{\bf P}^{(n)}$ be the rank $\tilde{r}_n \geq \bar{r}_n~\forall n$ orthogonal projection matrices from \eqref{eqn_definition} obtained via the truncated HoSVD of $\mathcal{W}_0$ as above. Then,
	\begin{equation*}
	\left\| \mathcal{W}_0 -  \hat{\mathcal{X}}_1 \right\|^2 = \left\| \left( \mathcal{X} - \hat{\mathcal{X}}_0 \right) - \sum^K_{k = 1} \left( \left( \mathcal{X} - \hat{\mathcal{X}}_0 \right) \bigtimes_{n=1}^N {\bf Q}^{(n)}_k \right) \right\|^2\leq \sum_{n=1}^N \left\| \left( \mathcal{X} - \hat{\mathcal{X}}_0 \right) \times_n \left( {\bf I}-\tilde{\bf P}^{(n)} \right)\right\|^2.
	\end{equation*}
	\label{lem:PartitionW0Approx}
\end{lem}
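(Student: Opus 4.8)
The plan is to rewrite the approximation error as a sum of per-subtensor errors, bound each one using the Tensor Pythagorean identity, invoke the effective-partition hypothesis \eqref{eqn_definition} term by term, and finally collapse the resulting pessimistic right-hand side back into a single clean expression. First I would combine $\mathcal{W}_0 = \sum_{k=1}^K \mathcal{X}|_k$ from \eqref{equ:SubTensorAddtoBigTensor} with the expansion $\hat{\mathcal{X}}_1 = \sum_{k=1}^K \left( \mathcal{X}|_k \bigtimes_{n=1}^N {\bf Q}^{(n)}_k \right)$ from \eqref{equ:Scale1ApproxDef} to obtain
\[
\mathcal{W}_0 - \hat{\mathcal{X}}_1 = \sum_{k=1}^K \left( \mathcal{X}|_k - \mathcal{X}|_k \bigtimes_{n=1}^N {\bf Q}^{(n)}_k \right).
\]
Because ${\bf Q}^{(n)}_k = {\bf R}_{k}^{(n)} {\bf Q}^{(n)}_k {\bf R}_{k}^{(n)}$ by \eqref{DefSubTensorProj}, each summand is supported on the index block $J_{1,k}^1 \times \dots \times J_{1,k}^N$, and these blocks are pairwise disjoint. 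Hence the summands are mutually orthogonal and the Pythagorean theorem gives $\left\| \mathcal{W}_0 - \hat{\mathcal{X}}_1 \right\|^2 = \sum_{k=1}^K \left\| \mathcal{X}|_k - \mathcal{X}|_k \bigtimes_{n=1}^N {\bf Q}^{(n)}_k \right\|^2$.

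Next, since each ${\bf Q}^{(n)}_k$ is an orthogonal projection, I would apply Lemma~\ref{lem:TensorPythagorean} to each term and then discard the intermediate projections: using Lemma~\ref{lem:modeProdProps}(c) to commute modes together with the fact that an orthogonal projection never increases the Frobenius norm, each summand satisfies $\left\| \mathcal{X}|_k - \mathcal{X}|_k \bigtimes_{n=1}^N {\bf Q}^{(n)}_k \right\|^2 \le \sum_{n=1}^N \left\| \mathcal{X}|_k \times_n ({\bf I} - {\bf Q}^{(n)}_k) \right\|^2$. This is exactly the left-hand side of the effective-partition hypothesis \eqref{eqn_definition}. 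Summing over $k$ and invoking \eqref{eqn_definition} for each $k$ then bounds the total by $\sum_{k=1}^K \sum_{n=1}^N \left\| \mathcal{W}_0 \times_n \tilde{\bf R}_{f(k)}^{(n)} ({\bf I} - \tilde{\bf P}^{(n)}) \bigtimes_{h \neq n}^N \tilde{\bf R}_{f(k)}^{(h)} \right\|^2$, and since $f$ is a bijection I may reindex the outer sum to run over $f(k)$ and drop it entirely.

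The decisive simplification — the step I expect to be the crux — is collapsing these pessimistic terms. For each fixed $n$, Lemma~\ref{lem:modeProdProps}(d) lets me rewrite the mode-$n$ factor $\tilde{\bf R}_{k}^{(n)} ({\bf I} - \tilde{\bf P}^{(n)})$ as $({\bf I} - \tilde{\bf P}^{(n)})$ applied first and $\tilde{\bf R}_{k}^{(n)}$ applied second, so that each term equals $\mathcal{Y}_n \bigtimes_{h=1}^N \tilde{\bf R}_{k}^{(h)}$, where $\mathcal{Y}_n := \mathcal{W}_0 \times_n ({\bf I} - \tilde{\bf P}^{(n)})$; that is, merely the restriction of $\mathcal{Y}_n$ to the $k$th pessimistic block. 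Because the pessimistic restriction matrices tile the whole index space into $K$ disjoint blocks (exactly as the $\mathcal{X}|_k$ do in \eqref{equ:SubTensorAddtoBigTensor}), summing their squared norms over $k$ recovers $\left\| \mathcal{Y}_n \right\|^2$ exactly. Summing over $n$ then yields $\sum_{n=1}^N \left\| \mathcal{W}_0 \times_n ({\bf I} - \tilde{\bf P}^{(n)}) \right\|^2 = \sum_{n=1}^N \left\| (\mathcal{X} - \hat{\mathcal{X}}_0) \times_n ({\bf I} - \tilde{\bf P}^{(n)}) \right\|^2$, which is precisely the claimed bound.

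Everything outside the collapse step is routine bookkeeping with the mode-product identities of Lemma~\ref{lem:modeProdProps} and the disjoint-support orthogonality already established for the subtensors; the one place demanding genuine care is recognizing that $\tilde{\bf R}_{k}^{(n)} ({\bf I} - \tilde{\bf P}^{(n)})$ factors through Lemma~\ref{lem:modeProdProps}(d) so that the full pessimistic term reduces to a block restriction of a single residual tensor $\mathcal{Y}_n$, after which the tiling property forces the telescoping sum to be an equality rather than a mere inequality.
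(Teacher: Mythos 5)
Your proposal is correct and follows essentially the same route as the paper's own proof: split the error across the disjointly supported subtensors, apply Lemma~\ref{lem:TensorPythagorean} together with the norm non-increase of orthogonal projections to reach the left-hand side of \eqref{eqn_definition}, invoke that hypothesis for each $k$, and collapse the pessimistic terms using the fact that the $\tilde{\bf R}_{k}^{(n)}$ tile the index space. Your explicit handling of the final collapse (via Lemma~\ref{lem:modeProdProps}(d) and the tensors $\mathcal{Y}_n$) is, if anything, a more careful rendering of the paper's terse last step, but the underlying argument is identical.
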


\begin{proof}
	We have that
	\begin{align}
	\left\| \mathcal{W}_0 -  \hat{\mathcal{X}}_1 \right\|^2 &= \left\| \mathcal{W}_0 -  \sum^K_{k = 1} \mathcal{W}_0 \bigtimes_{n=1}^N {\bf Q}^{(n)}_k \right\|^2  &\left(\textrm{Using \eqref{equ:ResidualDef} and \eqref{equ:Scale1ApproxDef}} \right) \nonumber\\
	&=
	\left\| \sum_{k=1}^K \mathcal{W}_0 \bigtimes_{n=1}^N {\bf R}_{k}^{(n)} -  \sum_{k=1}^K \mathcal{W}_0 \bigtimes_{n=1}^N {\bf Q}^{(n)}_k {\bf R}_{k}^{(n)} \right\|^2 & \left(\textrm{Using \eqref{equ:subTensorDef}, \eqref{equ:SubTensorAddtoBigTensor}, and \eqref{equ:PropResProjStuff}} \right) \nonumber\\
	&= 
	\left\|\sum_{k=1}^K \mathcal{W}_0 \bigtimes_{n=1}^N \left( {\bf R}_{k}^{(n)} - {\bf Q}^{(n)}_k {\bf R}_{k}^{(n)}\right)\right\|^2 &\left(\textrm{Using Lemma~\ref{lem:modeProdProps}} \right) \nonumber\\
	&= 
	\sum_{k=1}^K \left\| \mathcal{X}|_k \bigtimes_{n=1}^N \left( {\bf I} - {\bf Q}^{(n)}_k \right) \right\|^2. & \left(\textrm{Using Lemma~\ref{lem:modeProdProps}, \eqref{equ:subTensorDef}, \eqref{equ:PropResProjStuff}, and support disjointness} \right)
	\label{eqnL1_1}
	\end{align}  
	
	Applying lemmas~\ref{lem:modeProdProps}~and~\ref{lem:TensorPythagorean} to \eqref{eqnL1_1} we can now see that
	\begin{align*}
	\left\| \mathcal{W}_0 -  \hat{\mathcal{X}}_1 \right\|^2 =& \sum_{k=1}^K \sum_{n=1}^N \left\| \mathcal{X}|_k \bigtimes_{h=1}^{n-1} {\bf Q}^{(h)}_k \times_{n} \left({\bf I} - {\bf Q}^{(n)}_k \right) \right\|^2 
	\nonumber \\
	\leq& \sum_{k=1}^K \sum_{n=1}^N \left\| {\mathcal{X}}|_k\times_n \left({\bf I} - {\bf Q}^{(n)}_k \right) \right\|^2
	\end{align*}
	since the ${\bf Q}^{(n)}_k$ matrices are orthogonal projections.  Using assumption \eqref{eqn_definition} we now get that
	\begin{align*}
	\left\| \mathcal{W}_0 -  \hat{\mathcal{X}}_1 \right\|^2 \leq& \sum_{k=1}^K \sum_{n=1}^N \left\| \mathcal{W}_0 \times_n \tilde{\bf R}_{k}^{(n)} \left({\bf I}-\tilde{\bf P}^{(n)} \right) \bigtimes_{h\neq n}^{N} \tilde{\bf R}_{k}^{(h)}
	\right\|^2
	\\
	=& \sum_{n=1}^N \left\| \mathcal{W}_0 \times_n \left({\bf I}-\tilde{\bf P}^{(n)} \right) \right\|^2
	\end{align*}
	where we have used the fact that the pessimistic restriction matrices $\tilde{\bf R}_k^{(n)}$ partition $\mathcal{W}_0$ in the last line.
\end{proof}

Lemma~\ref{lem:PartitionW0Approx} indicates that the error in approximating $\mathcal{W}_0$ via low-rank approximations of its effective subtensors is potentially smaller than the error obtained by approximating $\mathcal{W}_0$ via (higher-rank) truncated HoSVDs whenever \eqref{eqn_definition} holds.\footnote{That is, the upper bound on the error provided by Lemma~\ref{lem:PartitionW0Approx} is less than or equal to the upper bound on the error for truncated HoSVDs provided by, e.g., \cite{vannieuwenhoven2012new} when/if \eqref{eqn_definition} holds.}  The following theorem shows that this good error behavior extends to the entire $1^{\rm st}$-scale approximation provided by \eqref{equ:ErrorFormulaForScale1} whenever \eqref{eqn_definition} holds.

\begin{thm}[Restatement of Theorem~\ref{MainThm}]
	Let $\mathcal{X} \in \mathbb{R}^{I_1\times I_2...\times I_N}$.  Suppose that \eqref{eqn_definition} holds.  Then, the first-scale approximation error given by MS-HoSVD \eqref{equ:ErrorFormulaForScale1} is bounded by
	\begin{align*}
	\left\| \mathcal{W}_1 \right\|^2 = \left\| \mathcal{X}-\hat{\mathcal{X}}_0-\hat{\mathcal{X}}_1 \right\|^2 \leq & \sum_{n=1}^N \left\| \mathcal{X}\times_n \left( {\bf I}-\bar{\bf P}^{(n)} \right) \right\|^2
	\end{align*}
	where $\left\lbrace\bar{\bf P}^{(n)}\right\rbrace$ are low-rank  projection matrices of rank $\bar{r}_n \geq r_n$ obtained from the truncated HoSVD of $\mathcal{X}$ as per \eqref{equ:BigHoSVDapprox}.
\end{thm}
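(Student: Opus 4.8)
The plan is to chain Lemma~\ref{lem:PartitionW0Approx} with two further mode-by-mode estimates that successively replace the $\mathcal{W}_0$-based projections $\tilde{\bf P}^{(n)}$ by the $\mathcal{X}$-based projections $\bar{\bf P}^{(n)}$. First I would observe that, by \eqref{equ:FirstScaleResidualDef}, $\mathcal{W}_1 = \mathcal{X} - \hat{\mathcal{X}}_0 - \hat{\mathcal{X}}_1 = \mathcal{W}_0 - \hat{\mathcal{X}}_1$, so Lemma~\ref{lem:PartitionW0Approx} applies verbatim and yields
$$\left\| \mathcal{W}_1 \right\|^2 \leq \sum_{n=1}^N \left\| \mathcal{W}_0 \times_n \left( {\bf I}-\tilde{\bf P}^{(n)} \right)\right\|^2.$$
Everything then reduces to bounding each summand $\left\| \mathcal{W}_0 \times_n ( {\bf I}-\tilde{\bf P}^{(n)} )\right\|^2$ by $\left\| \mathcal{X}\times_n ( {\bf I}-\bar{\bf P}^{(n)} )\right\|^2$.

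The second step exploits the rank hypothesis $\tilde{r}_n \geq \bar{r}_n$ built into the definition \eqref{eqn_definition}. Passing to the mode-$n$ matricization (using $\|\mathcal{Y}\| = \|{\bf Y}_{(n)}\|_{\rm F}$ together with \eqref{equ:KronModenFlat}) rewrites the summand as $\left\|({\bf I}-\tilde{\bf P}^{(n)}){\bf W}_{0,(n)}\right\|_{\rm F}^2$. Since $\tilde{\bf P}^{(n)}$ projects onto the top $\tilde{r}_n$ left singular vectors of ${\bf W}_{0,(n)}$, the Eckart--Young theorem makes this residual minimal among all rank-$\tilde{r}_n$ orthogonal projections; as $\bar{\bf P}^{(n)}$ is merely some rank-$\bar{r}_n$ projection with $\bar{r}_n \leq \tilde{r}_n$, I obtain
$$\left\|({\bf I}-\tilde{\bf P}^{(n)}){\bf W}_{0,(n)}\right\|_{\rm F}^2 \leq \left\|({\bf I}-\bar{\bf P}^{(n)}){\bf W}_{0,(n)}\right\|_{\rm F}^2.$$

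The final and most delicate step is to show that swapping $\mathcal{W}_0$ for $\mathcal{X}$ inside $({\bf I}-\bar{\bf P}^{(n)})$ costs nothing, i.e.\ that $({\bf I}-\bar{\bf P}^{(n)}){\bf W}_{0,(n)} = ({\bf I}-\bar{\bf P}^{(n)}){\bf X}_{(n)}$. Writing $\mathcal{W}_0 = \mathcal{X} - \hat{\mathcal{X}}_0$ with $\hat{\mathcal{X}}_0 = \mathcal{X}\bigtimes_{m} {\bf P}^{(m)}$ from \eqref{equ:LittleHoSVDapprox} and matricizing via \eqref{equ:KronModenFlat}, the mode-$n$ unfolding of $\hat{\mathcal{X}}_0$ takes the form ${\bf P}^{(n)}{\bf X}_{(n)}\big(\bigotimes_{m\neq n}{\bf P}^{(m)}\big)^\top$, so it suffices to verify $({\bf I}-\bar{\bf P}^{(n)}){\bf P}^{(n)} = {\bf 0}$. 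This is exactly where the hypothesis $\bar{r}_n \geq r_n$ enters decisively: both ${\bf P}^{(n)}$ (rank $r_n$) and $\bar{\bf P}^{(n)}$ (rank $\bar{r}_n$) project onto the leading left singular vectors of the \emph{same} matrix ${\bf X}_{(n)}$, so their ranges are nested, ${\rm col}({\bf P}^{(n)}) \subseteq {\rm col}(\bar{\bf P}^{(n)})$, giving $\bar{\bf P}^{(n)}{\bf P}^{(n)} = {\bf P}^{(n)}$ and hence $({\bf I}-\bar{\bf P}^{(n)}){\bf P}^{(n)} = {\bf 0}$. I expect this nesting to be the crux of the argument: it explains why the top-$\bar{r}_n$ mode-$n$ energy of the residual $\mathcal{W}_0$ coincides exactly with that of $\mathcal{X}$, since $\hat{\mathcal{X}}_0$ lives entirely inside the range of ${\bf P}^{(n)}$ in mode $n$ and is therefore annihilated by $({\bf I}-\bar{\bf P}^{(n)})$. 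Chaining the three displays and summing over $n\in[N]$ then produces the claimed bound $\left\| \mathcal{W}_1 \right\|^2 \leq \sum_{n=1}^N \left\| \mathcal{X}\times_n ( {\bf I}-\bar{\bf P}^{(n)} )\right\|^2$; the only care needed is a consistent choice of singular vectors so that the nesting of ranges holds in the presence of repeated singular values.
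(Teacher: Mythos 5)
Your proposal is correct and follows essentially the same route as the paper's proof: first Lemma~\ref{lem:PartitionW0Approx}, then Eckart--Young optimality of $\tilde{\bf P}^{(n)}$ for ${\bf W}_{0,(n)}$, then the nested-range annihilation $({\bf I}-\bar{\bf P}^{(n)}){\bf P}^{(n)}={\bf 0}$ coming from the fact that both projections are built from the leading left singular vectors of the same matricization ${\bf X}_{(n)}$. The only difference is cosmetic: the paper routes the argument through an auxiliary rank-$\tilde{r}_n$ projection $\bar{\bf Q}^{(n)}$ onto the top $\tilde{r}_n$ left singular vectors of ${\bf X}_{(n)}$ and only lowers the rank to $\bar{r}_n$ in a final monotonicity step, whereas you apply Eckart--Young directly against the rank-$\bar{r}_n$ projection $\bar{\bf P}^{(n)}$ (valid since $\bar{r}_n \leq \tilde{r}_n$), merging the paper's last two inequalities into one.
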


\begin{proof}
	Using \eqref{equ:ResidualDef} and \eqref{equ:FirstScaleResidualDef} together with lemma~\ref{lem:PartitionW0Approx} we can see that 
	\begin{align}
	\left\| \mathcal{W}_1 \right\|^2 = \left\| \mathcal{X}-\hat{\mathcal{X}}_0-\hat{\mathcal{X}}_1 \right\|^2 = \left\| \mathcal{W}_0 -\hat{\mathcal{X}}_1 \right\|^2 &\leq \sum_{n=1}^N \left\| \left( \mathcal{X} - \hat{\mathcal{X}}_0 \right) \times_n \left( {\bf I}-\tilde{\bf P}^{(n)} \right)\right\|^2 \nonumber\\
	&\leq \sum_{n=1}^N \left\| \left( \mathcal{X} - \hat{\mathcal{X}}_0 \right) \times_n \left({\bf I}-\bar{\bf Q}^{(n)} \right) \right\|^2
	\label{equ:BestRankrtildeApprox}
	\end{align}
	where $\bar{\bf Q}^{(n)} \in \mathbb{R}^{I_n \times I_n}$ is the orthogonal projection matrix of rank $\tilde{r}_n$ which projects onto the subspace spanned by the top $\tilde{r}_n$ left singular vectors of ${\bf X}_{(n)}$.  Here \eqref{equ:BestRankrtildeApprox} holds because the orthogonal projection matrices $\tilde{\bf P}^{(n)}$ are chosen in \eqref{eqn_definition} so that $\tilde{\bf P}^{(n)}{\bf W}_{0,(n)}$ is a best possible rank $\tilde{r}_n$ approximation to ${\bf W}_{0,(n)}$.  As a result, we have that 
	$$\left\| \left( \mathcal{X} - \hat{\mathcal{X}}_0 \right) \times_n \left( {\bf I}-\tilde{\bf P}^{(n)} \right)\right\|^2 = \left\| \left( {\bf I}-\tilde{\bf P}^{(n)} \right) {\bf W}_{0,(n)} \right\|^2_{\rm F} \leq \left\| \left( {\bf I}-\bar{\bf Q}^{(n)} \right) {\bf W}_{0,(n)} \right\|^2_{\rm F} = \left\| \left( \mathcal{X} - \hat{\mathcal{X}}_0 \right) \times_n \left({\bf I}-\bar{\bf Q}^{(n)} \right) \right\|^2$$
	must hold for each $n \in [N]$.
	
	Continuing from \eqref{equ:BestRankrtildeApprox} we can use the definition of $\hat{\mathcal{X}}_0$ in \eqref{equ:LittleHoSVDapprox} to see that 
	\begin{align}
	\left\| \mathcal{W}_1 \right\|^2 &\leq \sum_{n=1}^N \left\| \left( \mathcal{X} - \mathcal{X} \bigtimes_{h=1}^N {\bf P}^{(h)} \right) \times_n \left({\bf I}-\bar{\bf Q}^{(n)} \right) \right\|^2 \nonumber\\
	&= \sum_{n=1}^N \left\| \mathcal{X} \times_n \left({\bf I}-\bar{\bf Q}^{(n)} \right)- \mathcal{X} \bigtimes_{h=1}^N {\bf P}^{(h)} \times_n \left({\bf I}-\bar{\bf Q}^{(n)} \right) \right\|^2
	\label{equ:ThmAlmostdone}
	\end{align}
	by lemma~\ref{lem:modeProdProps}.  Due to the definition of $\bar{\bf Q}^{(n)}$ together with the fact that its rank is $\tilde{r}_n \geq r_n$ we can see that $\left({\bf I}-\bar{\bf Q}^{(n)} \right) {\bf P}^{(n)} = {\bf 0}$.  As a consequence, lemma~\ref{lem:modeProdProps} implies that $\mathcal{X} \bigtimes_{h=1}^N {\bf P}^{(h)} \times_n \left({\bf I}-\bar{\bf Q}^{(n)} \right) = {\bf 0}$ for all $n \in [N]$.  Continuing from \eqref{equ:ThmAlmostdone} we now have that 
	\begin{equation*}
	\left\| \mathcal{W}_1 \right\|^2 \leq \sum_{n=1}^N \left\| \mathcal{X} \times_n \left({\bf I}-\bar{\bf Q}^{(n)} \right) \right\|^2.
	\end{equation*}
	Again, appealing to the definition of both $\bar{\bf Q}^{(n)}$ and $\bar{\bf P}^{(n)}$ in \eqref{equ:BigHoSVDapprox}, combined with the fact that $\tilde{r}_n \geq \bar{r}_n$, finally yields the desired result.
\end{proof}

We refer the reader to the strong empirical performance of MS-HoSVD in Section~\ref{sec:DataCompress} for additional evidence supporting the utility of \eqref{equ:ErrorFormulaForScale1} as a means of improving the compression performance of standard HoSVD-based compression techniques.  In addition, we further refer the reader to Section~\ref{sec:Classification} where it is empirically demonstrated that MS-HoSVD is also capable of selecting more informative features than HoSVD-based methods for the purposes of classification.  These two facts together provide strong evidence that combining the use of clustering-enhanced multiscale approximation with existing tensor factorization techniques can lead to improved performance in multiple application domains.

\subsection{Experiment for Error Analysis}
\label{app:Example}

In this experiment we evaluate the error obtained by the $1^{\rm st}$-scale MS-HoSVD analysis of a tensor along the lines of the model described in Section \ref{multiScaleMethod}.
Herein we consider a three-way tensor $\mathcal{X}\in \mathbb{R}^{20\times20\times20}$ that is the sum of two tensors as $\mathcal{X}=\mathcal{X}_0 + \mathcal{X}_1$ where  $\mathcal{X}_0\in \mathbb{R}^{20\times20\times20}$ has $n$-rank $\left( 2,\;2,\;2 \right)$, and $\mathcal{X}_1\in \mathbb{R}^{20\times20\times20}$ is formed by concatenating $8$ subtensors $\mathcal{X}_k\in \mathbb{R}^{10\times10\times10}$ each also with $n$-rank $\left( 2,\;2,\;2 \right)$. Low-rank approximations for $\mathcal{X}$ and its subtensors are always obtained via the truncated HoSVD.  The 1-scale MS-HoSVD is applied with the ground truth partitions ${\bf R}_{k}^{(n)}$, partitions provided by Local Subspace Analysis (LSA) clustering $\hat{\bf R}_{k}^{(n)}$, and also with randomly chosen partitions $\tilde{\bf R}_{k}^{(n)}$ of the $0^{\rm th}$-scale residual error $\mathcal{W}_0$ into $8$ different $10\times10\times10$ subtensors. For the LSA clustering, the cluster numbers are selected as $2$ along each mode also yielding $8$ subtensors.

The $0^{\rm th}$-scale $n$-rank for MS-HoSVD is selected as $\left( 2,\;2,\;2 \right)$, and the $1^{\rm st}$-scale ranks are varied in the experiments as shown in Table \ref{error-sim3}. The normalized reconstruction error computed for these varying $1^{\rm st}$-scale $n$-ranks can also be seen in Table \ref{error-sim3}. As seen there, using ground truth partition provides lower-rank subtensors, and using clustering as part of the $1$-scale MS-HoSVD leads to much better approximations than HoSVD does in general.

\begin{table}[h]

\footnotesize

\centering

\caption{Mean and standard deviation for reconstruction error of low-rank approximations of $\mathcal{X}$ over 20 trials.}

\begin{tabular}{c||ccc}

\hline

     & \multicolumn{3}{|c}{Reconstruction Error} \\

\hline

0th scale rank     &$\left( 2,\;2,\;2 \right)$  &$\left( 2,\;2,\;2\right)$ & $\left( 2,\;2,\;2 \right)$
\\

1st scale rank     &$\left( 2,\;2,\;2 \right)$  &$\left( 4,\;4,\;4 \right)$ & $\left( 6,\;6,\;6 \right)$ \\ 

 \hline

Ground Truth     &   0.2502  &0.0304    &  -     \\
Partitioning &  $\pm$ 0.0263 &$\pm$ 0.0077  & - \\
 \hline
Clustering &  0.3587    & 0.1099  & 0.0254 \\
 by LSA		& $\pm$ 0.0583 & $\pm$ 0.0391   & $\pm$  0.0152 \\

 \hline
Random       & 0.6095       & 0.3588  &  0.1855    \\
Partitioning & $\pm$ 0.0398 		& $\pm$ 0.0298 &  $\pm$ 0.0251   \\
\hline
\hline
rank &$\left( 4,\;4,\;4 \right)$ & $\left( 8,\;8,\;8 \right)$ &$\left( 12,\;12,\;12 \right)$ \\ 
 \hline
truncated &  0.5457      & 0.2127  &   0.0733   \\
HoSVD      & $\pm$0.0449	 	 & $\pm$ 0.0195 & $\pm$ 0.0129     \\
 \hline
\end{tabular}
\label{error-sim3}

\end{table}

In addition, the left (LHS) and right (RHS) sides in Theorem \ref{MainThm} are computed where the first-scale projections ${\bf Q}^{(n)}_k$ each have rank $r_1=2$ and are obtained via both ground truth partitioning and clustering after the $0^{\rm th}$-scale ${\bf P}^{(n)}$ are obtained from the truncated HoSVD of $\mathcal{X}$ with $r_0 = 2$.  For comparison the $\bar{\bf P}^{(n)}$ are also computed from the truncated HoSVD of $\mathcal{X}$ with varying ranks $\bar{r}_n=\kappa r_0$ for $\kappa \in \left\lbrace 2\;,3\;,4\right\rbrace$. In Table \ref{error-sim3_1}, we report the mean value and standard deviation of both the right-hand side (RHS) and left-hand side (LHS) of the error bound in Theorem \ref{MainThm}. As seen in Table \ref{error-sim3_1}, Theorem \ref{MainThm} holds for the $1^{\rm st}$-scale approximation of $\mathcal{X}$ via MS-HoSVD since the RHS errors based on the $\bar{\bf P}^{(n)}$ projections are larger than the LHS errors no matter whether the ${\bf Q}^{(n)}_k$ are obtained via ground truth partitioning or LSA clustering.  

\begin{table}[h]
\footnotesize
\centering
\caption{Computed reconstruction error (mean and std.) corresponding to Theorem \ref{MainThm} for simulated data over 20 trials.}
\begin{tabular}{ccc||cccc}

\hline

  \multicolumn{3}{c||}{LHS}   & \multicolumn{3}{|c}{RHS} \\
\hline
0th scale rank & $\left( 2,\;2,\;2 \right)$  & & \multicolumn{3}{|c}{rank} \\

1st scale rank   &$\left( 2,\;2,\;2 \right)$& & $\left( 4,\;4,\;4 \right)$ & $\left( 6,\;6,\;6 \right)$ &$\left( 8,\;8,\;8 \right)$\\

 \hline

 Ground Truth  & 0.2618 &   &  &  &\\
 Partitioning   & $\pm$ 0.0236 &   & 2.9928 &1.1835&0.4228\\
	  \cline{1-3}
	
Clustering	 & 0.3469 &   & $\pm$0.5081&  $\pm$0.3010& $\pm$0.0978\\
by LSA  	&$\pm$ 0.0513 &     & &&\\
\hline

\end{tabular}
\label{error-sim3_1}

\end{table}

\end{document}